\documentclass[11pt,reqno]{amsart}

\usepackage{packages}

\title[On some \(k\)-fold generalizations of Lovász theta
  and their sandwich theorems%
  ]{%
    On some \(k\)-fold generalizations of Lovász theta\\
    and their sandwich theorems}
\author[M.K. Carli Silva]{Marcel {K.} Carli Silva\textsuperscript{1\textasteriskcentered}\(^\dagger\)}
\address{%
  \textsuperscript{1}Institute of Mathematics, Statistics, and
  Computer Science, University of São Paulo%
}
\thanks{%
  \textsuperscript{\textasteriskcentered}%
  {M.K.} Carli Silva and {G.} Coutinho acknowledge support
  from National Council for Scientific and Technological Development
  -- CNPq (408180/2023-4).
}
\thanks{%
  \(^\dagger\)%
  {M.K.} Carli Silva acknowledges support from São Paulo Research
  Foundation (FAPESP), Brazil.
  Process Number \mbox{\#2023/03167-5}.
}
\author[G. Coutinho]{Gabriel Coutinho\textsuperscript{2\textasteriskcentered}\(^\ddagger\)}
\address{%
  \textsuperscript{2}Federal University of Minas Gerais%
}
\thanks{%
  \(^\ddagger\)%
  {G.} Coutinho acknowledges support from the Fundação de Amparo à Pesquisa do Estado de Minas Gerais (FAPEMIG)
}
\author[T. Oliveira]{Thiago Oliveira\textsuperscript{3\S}}
\address{%
  \textsuperscript{3}Georgia Institute of Technology%
}
\thanks{%
  \(^\S\)%
  {T.} Oliveira was partially supported by CAPES during the early stages
  of this work.
}
\author[L. Tunçel]{Levent Tunçel\textsuperscript{4\P}}
\address{%
  \textsuperscript{4}Department of Combinatorics and Optimization, University of Waterloo%
}
\thanks{%
  \(^\P\)%
  {L.} Tunçel acknowledges support from the Natural Sciences and
  Engineering Research Council (NSERC) of Canada via Discovery Grants.
}
\thanks{%
  \(^\ddagger\)%
  Corresponding Author: Gabriel Coutinho.
  Affiliation: Federal University of Minas Gerais.
  E-mail: \texttt{gabriel@dcc.ufmg.br}%
}

\begin{document}

\begin{abstract}
  We study several \(k\)-fold generalizations of the Lovász theta
  function associated with the maximum \(k\)-colorable induced subgraph
  problem.
  The first is the Narasimhan--Manber parameter \(\vartheta_k\).
  We prove that, for graphs whose adjacency matrix belongs to a
  homogeneous partially coherent algebra, this parameter is recovered by
  the theta number of the Cartesian product with the complete graph on
  \(k\) vertices.
  This class includes distance-regular and \(1\)-walk-regular graphs,
  and thus our result generalizes a theorem by
  \citeauthor{SinjorgoS22a}
  (\citeyear{SinjorgoS22a}) for graphs that are vertex- and
  edge-transitive.
  We introduce a new parameter \(\varphi_k\) obtained from orthonormal
  representations of graphs and show the inequality
  \(\varphi_k \leq \vartheta_k\).
  For both parameters, we study the smallest \(k\) for which the
  parameter is equal to the number of vertices; these saturation
  parameters yield lower bounds on the chromatic number.
  We determine which vertex-weighted versions of these parameters are
  gauges, and discuss a natural definition for the \(k\)-fold theta body
  of a graph.
  We conclude with open questions comparing \(\vartheta_k\),
  \(\varphi_k\), \(\vartheta(G\Cartesian K_k)\), and related
  convexifications.
\end{abstract}

\maketitle

\section{Introduction}
\label{sec:intro}

The Lovász theta number~\(\vartheta(G)\) of a graph~\(G\), introduced
in~\cite{Lovasz79a}, has remarkable applications across
combinatorial optimization and spectral graph
theory~\cite{GroetschelLS86a,GvozdenovicL08a,GodsilRRSV20a}.
Perhaps one of its most famous properties is the sandwich theorem
(see~\cite{Knuth94a}), which states that
\begin{equation}
  \label{eq:vanilla-sandwich}
  \alpha(G) \leq \vartheta(G)
  \leq \chi_f(\overline{G}) \leq \chi(\overline{G}),
\end{equation}
where \(\alpha\) denotes the independence number, \(\chi_f\) is the
fractional chromatic number, \(\chi\) is the chromatic number, and
\(\overline{G}\) is the complement of~\(G\).
Moreover, \(\vartheta(G)\) can be formulated as the optimal
value of a semidefinite
program~(SDP) which allows it to be computed in polynomial time to
arbitrary precision.

Let \(G\) be a graph and let \(k \geq 1\).
We denote by \(\alpha_k(G)\) the largest number of vertices of an
induced subgraph of~\(G\) that is \(k\)-colorable.
The \(k\)-multicoloring number \(\chi_k(G)\) of~\(G\), sometimes
called~\cite{CampeloMS16a} the \(k\)-fold colouring number of~\(G\),
is the smallest number of colors needed so that one can assign to each
vertex a \(k\)-subset of these colors and adjacent vertices are
assigned disjoint sets.
Thus, \(\alpha_1 = \alpha\) and \(\chi_1 = \chi\).
We informally refer to such parameters as \emph{\(k\)-fold
  generalizations}.
The main topic of this paper is a \(k\)-fold generalization
of~\(\vartheta\), denoted \(\vartheta_k\) and introduced by
\nameandcite{NarasimhanM88a}, which recovers \(\vartheta\)
when \(k=1\) and satisfies the following sandwich theorem:
\begin{equation}
  \label{eq:k-sandwich}
  \alpha_k(G) \leq \vartheta_k(G) \leq \chi_k(\overline{G}).
\end{equation}
Moreover, \(\vartheta_k(G)\) also admits a formulation as the optimal
value of an SDP; see, e.g.,~\cite[Sec.~5.3]{Alizadeh95a}.

A key motivation for this paper and other recent work
on~\(\vartheta_k\) is to determine the quality of approximations of
\(\alpha_k\) via geometric strengthenings of \(\vartheta_k\), and
subsequently the quality of bounds for~\(\chi\) that can be obtained
from~\(\vartheta_k\).
We survey below some of the results that motivate this paper.

Let \(G\) be a graph on \(n\) vertices.
\Nameandcite{MoharP93a} pointed out that if \(k \geq \chi(G)\), then
\(n \leq \alpha_k(G)\), whence \(n \leq \vartheta_k(G)\).
This elementary remark shows that \(\vartheta_k\) can be used to
obtain lower bounds on \(\chi\); namely,
\begin{equation}
  \label{eq:mu-def}
  \mu(G) \coloneqq \min\setst{k}{\vartheta_k(G) = n} \leq \chi(G).
\end{equation}
The recent work \cite{CarliCG21a} has shown a strong
generalization of this result by exploring vertex weights and by
replacing \(\chi(G)\) by \(\chi_f(G)\) in~\cref{eq:mu-def}.
In this paper we will introduce an SDP formulation for \(\mu(G)\),
study its behavior in highly regular graphs, and present a conjecture
relating it to \(\vartheta(\overline{G}).\)

Recent works~\cite{GvozdenovicL08a,BenedettoCC21a} provide other
frameworks for converting upper bounds for \(\alpha\) into lower
bounds for~\(\chi\).
In the approach of~\cite{GvozdenovicL08a}, the input graph parameter
is applied to the Cartesian product \(G \Cartesian K_k\).
Motivated by this, \nameandcite{SinjorgoS22a} compared
\(\vartheta(G \Cartesian K_k)\) and~\(\vartheta_k(G)\) in order to
determine which of \(\vartheta\) or \(\vartheta_k\) yields a tighter
bound for~\(\chi\).
Numerical results from~\cite{KuryatnikovaSV22a} led them to
propose the following conjecture:
\begin{conjecture}[{\cite[Conjecture~1]{SinjorgoS22a}}]
  \label{conj:ss}
  For any graph~\(G\) and integer number \(k \geq 1\), one has
  \(\vartheta(G \Cartesian K_k) \leq \vartheta_k(G)\).
  Equality holds if \(\vartheta_k(G) = k\vartheta(G)\).
\end{conjecture}

\Citeauthor{SinjorgoS22a} provided further support for the
conjecture by proving the following result and by computing the
(common) values of these two parameters for various classical
algebraic families of graphs.
\begin{theorem}[{\cite[Theorem~5.11]{SinjorgoS22a}}]
  \label{thm:ss-very-symmetric}
  Let \(G\) be a graph with \(n\) vertices that is both vertex- and
  edge-transitive.
  Let \(k \geq 1\) be an integer.
  Then
  \(\vartheta(G \Cartesian K_k) = \min\{k\vartheta(G),n\} =
  \vartheta_k(G)\).
\end{theorem}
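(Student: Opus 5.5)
The plan is to prove the two equalities separately, each by matching an upper bound with a lower bound. Throughout we use two standard facts. First, \(\vartheta\) of a vertex-transitive graph \(H\) on \(N\) vertices satisfies \(\vartheta(H)\vartheta(\overline{H}) = N\). Second, for an edge-transitive \(G\) the theta SDP can be symmetrized so that the optimal matrix lies in the span of \(I\), \(A(G)\) and \(J\). We also use the Narasimhan--Manber formulation
\begin{equation*}
\vartheta_k(G) = \max\{\langle J, X\rangle : \operatorname{tr} X = k,\ 0 \preceq X \preceq I,\ X_{ij}=0 \text{ for } ij\in E(G)\}.
\end{equation*}
(If the paper's SDP differs from this, the same argument should adapt.)

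\emph{Step 1: \(\vartheta_k(G) = \min\{k\vartheta(G), n\}\).} For the upper bound, \(\vartheta_k(G)\le n\) holds because \(X\preceq I\) implies \(\langle J,X\rangle \le \lambda_{\max}(J)=n\); also \(\vartheta_k(G)\le k\vartheta(G)\) because \(X/k\) is feasible for the theta SDP. For the lower bound, take a symmetric optimal theta matrix \(Y = \frac1n I + cA\), which is feasible for the theta SDP and has \(\langle J,Y\rangle=\vartheta(G)\). Then \(J/n\) is an eigenvector of \(Y\) with eigenvalue \(\vartheta(G)/n\). Vertex-transitivity makes \(G\) regular, so \(A\), and hence \(Y\), commutes with \(J\). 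Next interpolate between \(kY\) and \(I\). Let \(t\) be the largest scalar in \([0,k]\) with \(tY\preceq I\) and \(\operatorname{tr}(tY)\le k\). Put \(X = tY + s(I - tY)\), choosing \(s\in[0,1]\) so that \(\operatorname{tr} X = k\). Then \(X\) is feasible. The key computation is that \(\lambda_{\max}(Y)=\vartheta(G)/n\), attained on the all-ones vector. I expect this to be the main obstacle; it follows from \(\vartheta(G)\vartheta(\overline{G})=n\) for vertex-transitive graphs together with the spectral form of the optimal \(Y\) for edge-transitive graphs, where the non-principal eigenvalues of \(Y\) are bounded by the principal one. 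Granting it, if \(k\vartheta(G)\le n\) we get \(t=k\) and value \(k\vartheta(G)\). Otherwise \(\langle J,X\rangle\) reaches \(n\), since \(X\) acts as the identity on \(\mathbf 1\) once \(t\vartheta/n = 1\). If bounding the other eigenvalues is delicate, the fallback is a simpler construction: \(X = \frac{k}{n}I + \beta A\), chosen with \(\beta\) as large as the constraints \(X\succeq0\), \(X\preceq I\) allow. This parametrizes everything by the eigenvalues of \(A\) and reduces the claim to Lovász's eigenvalue formula \(\vartheta(G) = -n\lambda_{\min}/(\lambda_{\max}-\lambda_{\min})\), valid for edge-transitive graphs.

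\emph{Step 2: \(\vartheta(G\Cartesian K_k) = \min\{k\vartheta(G),n\}\).} For the upper bound, the \(k\) layers \(G\times\{i\}\) give a vertex partition of \(G\Cartesian K_k\) into copies of \(G\). Since \(\vartheta\) is subadditive over vertex partitions, \(\vartheta(G\Cartesian K_k)\le k\vartheta(G)\). The columns \(\{v\}\times K_k\) partition the vertices into \(n\) cliques, giving \(\vartheta(G\Cartesian K_k)\le \bar\chi = n\). For the lower bound, \(G\Cartesian K_k\) is vertex-transitive, being the product of vertex-transitive graphs. Hence \(\vartheta(G\Cartesian K_k) = nk/\vartheta(\overline{G\Cartesian K_k})\), and it suffices to show \(\vartheta(\overline{G\Cartesian K_k})\le \max\{n/\vartheta(G), k\}\).

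To bound this, use Lovász's eigenvalue bound \(\vartheta(H)\le -N\lambda_{\min}(B)/(\lambda_{\max}(B)-\lambda_{\min}(B))\) for weighted adjacency matrices \(B\) of \(H=\overline{G\Cartesian K_k}\). Take \(B = \overline{A}\otimes (J_k - I_k)\,\cdot a + \overline{A}\otimes I\cdot b + J_n\otimes(J_k-I_k)\cdot c\), supported on the edges of \(H\), i.e., on pairs \((u,i),(v,j)\) with \(u\ne v\), \(uv\notin E\) or \(i\ne j\), \(u \ne v\), or \(u=v\). The eigenvalues of this matrix are explicit in terms of those of \(A\) and \(J_k\). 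We then optimize over \(a,b,c\), using the eigenvalue formula for \(\vartheta(\overline G)=n/\vartheta(G)\) that holds for edge-transitive \(G\). Two regimes appear, according to which eigenvalue is extremal, and they should give exactly \(n/\vartheta(G)\) and \(k\). This optimization is routine but is where the case split \(k\vartheta(G)\lessgtr n\) matches Step 1, which completes the proof.
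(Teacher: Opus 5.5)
\textbf{Step 1.} Everything rests on one fact: an optimal constant-diagonal solution \(Y\) of the theta SDP satisfies \(Y \preceq \frac{\vartheta(G)}{n}I\) and \(Y\ones = \frac{\vartheta(G)}{n}\ones\). You assume this, and the justification you sketch uses a false structural claim. Edge-transitivity lets you symmetrize the \emph{dual} variable, which is supported on \(E(G)\), down to \(J + xA_G\); that is where Lov\'asz's eigenvalue formula comes from. The primal \(Y\) must vanish on \(E(G)\), so \(Y = \frac1n I + cA_G\) is infeasible for \(c \neq 0\). In fact \(Y\) lives on the diagonal and the non-edges, and \(\operatorname{Aut}(G)\) may split the non-edges into several orbits. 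For \(G = Q_3\) (distance-transitive, \(\vartheta(Q_3)=4\)), every feasible \(Y\) in the span of \(I\) and \(A_{\overline{G}}\) has value at most \(3\). Likewise your fallback \(X = \frac{k}{n}I + \beta A_{\overline{G}}\) with \(k=2\) reaches only \(6 < 8 = \min\{2\vartheta(Q_3),8\}\).

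The missing idea is Lemma~\ref{lem:largestevalue}. Let \(h\) be a unit eigenvector of \(Y\) with eigenvalue \(\lambda\). Then \(n(Y\circ hh^\top)\) is feasible for \(\vartheta(G)\): its trace is \(\|h\|^2=1\) because \(\diag Y = \frac1n\ones\), the zero pattern is kept, and it is PSD by Schur. Its value is \(n\,h^\top Y h = n\lambda\), so \(n\lambda\le\vartheta(G)\). Then \(\vartheta(G)=\langle J,Y\rangle\le n\lambda_{\max}(Y)\) forces \(\ones\) to be a top eigenvector. This uses only vertex-transitivity. With this lemma and \(\vartheta(G)\vartheta(\overline{G})=n\), your interpolation with \(t=\min\{k,\vartheta(\overline{G})\}\) is correct.

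\textbf{Step 2.} Your upper bounds are valid for integer \(k\) and are a fine alternative to the paper's SDP estimate: subadditivity over the \(k\) layers, and the clique cover by the columns. Reducing the lower bound to \(\vartheta(\overline{G\Cartesian K_k})\le\max\{n/\vartheta(G),k\}\) is also correct. The certificate, however, fails for two reasons.

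First, \(J_n\otimes(J_k-I_k)\) puts weight on pairs \((u,i),(u,j)\). These are edges of \(G\Cartesian K_k\), hence non-edges of \(H\), so the eigenvalue bound does not apply.

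More seriously, the family is too coarse even after correcting that term to \(A_G\otimes(J_k-I_k)\). It assigns one weight to all non-edges of \(G\) inside a layer and one weight to all of them across layers. You invoke the eigenvalue formula for \(\vartheta(\overline{G})\), but Lov\'asz's formula needs \(\overline{G}\) itself to be edge-transitive, which does not follow from \(G\) being so. For \(\overline{Q_3}\) the formula gives \(8/3\), while \(\vartheta(\overline{Q_3})=2\). Concretely, take \(G=Q_3\) and \(k=2\), so \(G\Cartesian K_2=Q_4\). A direct eigenvalue computation shows that the best bound obtainable from \(\alpha A_{\overline{G}}\otimes I_2+\beta A_{\overline{G}}\otimes(J_2-I_2)+\gamma A_G\otimes(J_2-I_2)\) is \(\vartheta(\overline{Q_4})\le 8/3\), not the required \(2\). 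So the ``routine optimization'' cannot close the argument.

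The paper avoids the complement altogether. It writes down an explicit primal point of the Gvozdenovi\'c--Laurent SDP: \(X_0=\frac1k\hat X\) and \(Y_0=\frac{1}{k(\lambda-1)}\bigl(\frac1nJ-\hat X\bigr)\), with \(\lambda=\max\{k,\vartheta(\overline{G})\}\). For integer \(k\) this is the matrix \(X_0\otimes I_k+Y_0\otimes(J_k-I_k)\) on \(V\times[k]\). Both \(X_0-Y_0\succeq0\) and \(X_0+(k-1)Y_0\succeq0\) follow from the same eigenvalue lemma. So that one lemma drives both halves of the proof, and edge-transitivity is never needed.
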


We will extend this result to a much larger class of graphs that
include distance-regular and \(1\)-walk-regular graphs.

The paper \cite{KuryatnikovaSV22a} presents several lifted
formulations that attempt to approximate~\(\vartheta_k\).
We identify that one of them suggests the definition of \(k\)-fold
theta body, which we formalize in this paper.
This led us to study whether weighted \(\vartheta_k\) is a monotone
gauge (it is not), and to introduce several variants based on the
standard theory for \(k = 1\).
We study their behavior with regards to convexity and their relation
to \(\chi\) and \(\chi_f\).
One of the objects we introduce has an interesting connection to the
minimum dimension of orthonormal representations.
We also prove several inequalities, including examples that show some
of them are strict.

All the works mentioned above consider only integer values of~\(k\).
A subtle point of this work is that allowing fractional/real values
of~\(k\) can lead to simpler and more natural statements, smoothing
out the rounding phenomena that arise when \(k\) is restricted to
integers.

\subsection*{Our contributions}

Our main contributions are the substantial extension of
\cref{thm:ss-very-symmetric}, and the introduction of the parameter
\(\varphi_k(G,w)\), along with the proof of several of its properties.

\begin{itemize}[leftmargin=*,itemsep=8pt]
\item In Section~\ref{sec:highlyregular} we recall the definition of a
  homogeneous partially coherent algebra, and discuss several
  consequences that it implies for the optimal solutions
  determining~\(\vartheta\).
  In \cref{sec:cartesian}, we generalize \cref{thm:ss-very-symmetric}
  to the class of graphs whose adjacency matrix belongs to these
  algebras, which includes vertex-transitive graphs, distance-regular
  graphs (and thus strongly regular graphs), and 1-walk-regular
  graphs.
  This represents a wide extension of graphs for which
  Conjecture~\ref{conj:ss} is known to hold, and it provides a unified
  approach to several families of graphs considered
  in~\cite[Sections~5 and~6]{SinjorgoS22a}.
  In \cref{sec:mu}, we show that \(\mu(G) = \vartheta(\overline{G})\)
  for graphs whose adjacency matrix defines a homogeneous partially
  coherent algebra, and we propose the conjecture that
  \(\mu(G) \leq \vartheta(\overline{G})\) for all graphs.
\item In \cref{sec:varphi}, we define and study the graph parameter
  \(\varphi_k\) inspired by one of the many alternative formulations
  of \(\vartheta\) given by \nameandcite{Lovasz79a}, based on
  orthonormal representations.
  We show that, although \(\varphi_1 = \vartheta_1\), for other values
  of~\(k\) one only has the inequality
  \(\varphi_k(G) \leq \vartheta_k(G)\).
  We relate \(\varphi_k\) to the minimum dimension
  \(\xi(\overline{G})\) of an orthonormal representation
  of~\(\overline{G}\): we show that for
  \(k \coloneqq \xi(\overline{G})\) one has \(\varphi_k(G) = n\), the
  number of vertices of~\(G\).
\item In \cref{sec:alphak-prime}, we introduce \(\alpha_k'(G)\) to be
  the largest number of vertices in an induced subgraph of \(G\) whose
  fractional chromatic number is \(\leq k\).
  We show that \(\alpha_k'(G) \leq \vartheta_k(G)\), so that
  \(\vartheta_k\) cannot distinguish
  \(k\)\nobreakdash-colorability from having \emph{fractional}
  chromatic number bounded by~\(k\).
\item In \cref{sec:weights}, we extend several parameters to
  vertex-weighted versions, preserving the known inequalities.
  We show that, if \(w \in \Reals_+^V\) is a vector of nonnegative
  vertex weights, the weighted variation \(\vartheta_k(G,w)\) of
  \(\vartheta_k(G)\) is \textbf{not} sublinear, while
  \(\varphi_k(G,w)\) is.
  In particular, unlike the \(k=1\) case, \(\vartheta_k(G,w)\) cannot
  in general be described as the support function of a convex body
  analogous to the theta body.
\item In Section~\ref{subsec:psi-k}, we recall the vertex-weighted
  version of the \(\psi_k\) studied by Lovász in the context of
  \(k\)-perfect graphs, and we show that
  \(\varphi_k(G,w) \leq \psi_k(\overline{G},w)\).
  Thus, \(\varphi_k\) is a variant that satisfies a sandwich
  inequality conjectured by
  \citeauthor{Narasimhan89a}~\cite[Sec.~4.4.3]{Narasimhan89a} for
  \(\vartheta_k\).
\item In \cref{sec:theta-body-k}, we introduce
  \[
    \THbody_k(G)\coloneqq k\THbody(G)\cap[0,1]^V
  \]
  and its support function \(\theta_k(G,w)\).
  We give a semidefinite representation for \(\THbody_k(G)\),
  prove \(\alpha_k(G,w)\leq \varphi_k(G,w) \leq \theta_k(G,w)\), and
  discuss how this construction relates to matrix-lifting relaxations
  of~\cite{KuryatnikovaSV22a}.
\end{itemize}

\subsection*{Notation and background on \texorpdfstring{\(\vartheta_k\)}{ϑₖ}}

We typically denote a graph as \(G = (V,E)\), with
\(n \coloneqq \card{V}\).
We write \(\Sym{V}\) for the set of real symmetric \(V \times V\)
matrices, and \(\Psd{V}\) for the subset of these which are positive
semidefinite.
If \(M \in \Sym{V}\), then we shall typically order its eigenvalues as
\begin{equation*}
  \lambda_1(M)\ge\cdots\ge\lambda_n(M).
\end{equation*}
The Schur product of matrices \(A\) and \(B\) of same order is defined
by
\[(A \schur B)_{ij} = A_{ij}B_{ij}.\]
The (trace) inner product of \(A,B \in \Sym{V}\) is \(\ip{A}{B}=\trace(AB)\).
It is convenient to observe that
\[\ip{A}{B} = \ip{J}{A \schur B},\]
where \(J = \ones\ones^\T\) is the matrix of all-ones and \(\ones\) is
the vector of all-ones.
That~is, \(\ip{A}{B}\) is the sum of all entries of the matrix
\(A \schur B\).
The operator \(\diag\) maps a \(V \times V\) matrix \(Y\) to a vector in
\(\Reals^V\) consisting of the diagonal entries of \(Y\).

For each \(k\in[n] \coloneqq \{1,\dots,n\}\) and \(M \in \Sym{V}\), denote
the Ky Fan \(k\)-sum
\begin{equation*}
  \Lambda_k(M)
  \coloneqq
  \sum_{i=1}^k\lambda_i(M).
\end{equation*}
Recall that \(\Lambda_k(M)\) is the optimal value of the following
primal-dual pair of SDPs (see \cite{Fan49a,NesterovN94a,Alizadeh95a}):
\begin{equation}
  \label{eq:kyfan}
  \begin{aligned}
    \Lambda_k(M)
    & =
    \max\setmap[\big]{\ip{M}{X}}{0\preceq X\preceq I,\,\ip{I}{X}=k}\\
    & =
    \min\setmap[\big]{k\nu + \ip{I}{Y}}%
      {\nu \in \Reals,\, Y \in \Psd{V},\, M\preceq Y+\nu{}I};
  \end{aligned}
\end{equation}
here we write \(A \preceq B\) to mean that \(B-A\) is positive
semidefinite.
Using these formulations, \(\Lambda_k\) may be defined for \emph{real}
nonnegative \(k\).

Denote the adjacency matrix of a graph~\(G\) as~\(A_G\).
For each \(k \in [n]\), the graph parameter \(\vartheta_k\) is defined
by
\begin{equation}
  \label{eq:thkdef}
  \vartheta_k(G)
  \coloneqq
  \max\setmap[\big]{\ip{J}{X}}{
    \ip{I}{X} = k,\, A_G \schur X = 0,\, 0 \preceq X \preceq I
  }.
\end{equation}
When \(k = 1\), the constraint \(X \preceq I\) is implied by the
constraints \(X \succeq 0\) and \(\ip{I}{X} = 1\), so
\(\vartheta_1(G)\) recovers the famous Lovász theta
number~\(\vartheta(G)\):
\begin{equation}
  \label{eq:thdef}
  \vartheta(G)
  =
  \max\setmap[\big]{\ip{J}{X}}{
    \ip{I}{X} = 1,\, A_G \schur X = 0,\, X \succeq 0
  }.
\end{equation}
Note that the SDP in~\cref{eq:thkdef} is perfectly well defined for
each~\(k\) in the \emph{real} interval \([1,n]\).
We adopt this extended definition of~\(\vartheta_k\).
Furthermore, the optimal value in~\cref{eq:thkdef} does not change if
one replaces the constraint \(\ip{I}{X} = k\) with
\(\ip{I}{X} \leq k\), which shows that \(\vartheta_k(G)\) is a
nondecreasing function of~\(k\).

Strong duality holds (see \cite{Alizadeh95a} or \cite[Section
3.1]{KuryatnikovaSV22a}), and therefore
\[ \vartheta_k(G) = \min \left\{\, k\nu  + \ip{I}{Y} :
    \nu \in \Reals, \, Y \in \Psd{V}, \,
    Z \in \Sym{V}, \, Z \schur A_G = Z, \,
    \nu{}I + Y + Z \succcurlyeq J \,\right\}.
\]

\section{\texorpdfstring{\(\vartheta_k\)}{ϑₖ} for highly regular graphs}
\label{sec:highlyregular}

In this section we explore two aspects of \(\vartheta_k\) for graphs
admitting a strong form of regularity, which we introduce below.
For these graphs, we significantly extend a result of
\nameandcite{SinjorgoS22a} comparing \(\vartheta_k(G)\) and
\(\vartheta(G \Cartesian K_k)\); and we study the chromatic number
lower bound \(\mu(G)\) defined in~\cref{eq:mu-def}.

We follow a definition from \cite{MancinskaRV24a}: for a graph
\(G\) with adjacency matrix \(A_G\), we say that a sub-algebra
\(\mathcal{A}_G\) of \(\Reals^{V \times V}\) is a partially coherent
algebra with respect to \(I\) and \(A_G\) if it contains \(I\), \(J\)
and \(A_G\), is self-adjoint, and is closed under Schur product with
\(I\) and \(A_G\).
We say that \(\mathcal{A}_G\) is homogeneous if every matrix in it has
constant diagonal.

The following are all examples of graphs whose adjacency matrix
belongs to a homogeneous partially coherent algebra: distance-regular
graphs, vertex-transitive graphs, or more generally graphs whose
adjacency matrix belongs to homogeneous coherent algebras; and
\(1\)-walk-regular graphs.
In \cite{CarliCGR19a}, some sufficient conditions for
\(\vartheta(G) \overline{\vartheta}(G) = n\) were introduced, and they
are satisfied by graphs whose adjacency matrix belongs to a
homogeneous partially coherent algebra.
We summarize the main result we need below.

\begin{theorem}[Corollary 4.2 in \cite{CarliCGR19a}.]
  \label{thm:ththbar}
  Let \(G\) be a graph on \(n\) vertices whose adjacency matrix
  belongs to a homogeneous partially coherent algebra.
  Then
  \[
    \vartheta(G) \vartheta(\overline{G}) = n.
  \]
\end{theorem}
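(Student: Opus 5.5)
We will prove the two inequalities \(\vartheta(G)\vartheta(\overline{G}) \geq n\) and \(\vartheta(G)\vartheta(\overline{G}) \leq n\) separately. The first holds for every graph. The second is where the algebra enters: we will build a feasible dual solution for \(\vartheta(G)\) of value \(n/\vartheta(\overline{G})\), using an optimal solution for \(\vartheta(\overline{G})\) that lies in the algebra.

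For the general inequality \(\vartheta(G)\vartheta(\overline{G}) \geq n\), we use Lovász's classical argument. Let \(X\) be optimal for \(\vartheta(\overline{G})\) in~\cref{eq:thdef}, and write \(X = U^\T U\) with columns \(u_i\). Normalizing the nonzero columns gives an orthonormal-type representation, which yields a feasible solution for the dual (or vector) formulation of \(\vartheta(G)\). The known inequality \(\vartheta(G)\vartheta(\overline{G}) \geq n\) is standard (Lovász 1979), so we may simply cite it.

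For the upper bound, let \(\mathcal{A}\) be the homogeneous partially coherent algebra containing \(A_G\). Since \(A_{\overline{G}} = J - I - A_G\), the SDP for \(\vartheta(\overline{G})\) has constraints \(\ip{I}{X}=1\), \(X \schur (J-I-A_G) = 0\), and \(X \succeq 0\). We will project an optimal \(X\) onto \(\mathcal{A}\) and argue that the projection remains feasible with the same objective value. We expect this projection step to be the main obstacle. The projection \(P\) should be the orthogonal projection onto \(\mathcal{A}\) with respect to the trace inner product. For this we need three facts. First, \(P\) preserves positive semidefiniteness; we would try the standard argument that \(\mathcal{A}\) is a \(*\)-algebra containing \(I\), so \(P\) is a conditional expectation onto it. Second, \(P\) commutes with Schur products by \(I\) and \(A_G\); this uses that \(\mathcal{A}\) is closed under these Schur products, so \(M\mapsto M\schur A_G\) is self-adjoint and maps \(\mathcal{A}\) into itself, hence commutes with \(P\). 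Third, \(\ip{J}{\cdot}\) and \(\ip{I}{\cdot}\) are preserved, since \(J, I \in \mathcal{A}\).

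Given such an optimal \(X \in \mathcal{A}\), homogeneity gives \(\diag(X) = \tfrac1n\ones\). Moreover, \(J\) lies in the commutative span generated by the relevant elements, and \(X\) commutes with \(J\) because \(X\ones\) is constant. Indeed, \(X J \in \mathcal{A}\), and \(\ip{J}{X}\) is the row-sum structure; we would verify constant row sums via \(XJ = JX\), using that \(\ones\) spans an eigenspace of \(J\) and that \(\mathcal{A}\) is closed under transpose. Hence \(X\ones = \frac{\vartheta(\overline{G})}{n}\ones\). We then set \(Y \coloneqq \frac{n}{\vartheta(\overline{G})}X\), whose support is on \(I + A_G\) and which satisfies \(\ones^\T Y \ones = n\) and \(Y \succeq 0\). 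After rescaling, \(Y\) gives a feasible dual certificate for \(\vartheta(G)\) in the form \(\vartheta(G) \leq \lambda_1(M)\), where \(M = J - tY'\) is supported appropriately. Concretely, we take \(M \coloneqq J - \frac{n}{\vartheta(\overline{G})}nX + \ldots\); the point is that \(\ones\) is an eigenvector of \(X\), so the eigenvalues of \(J - cX\) can be computed on \(\ones\) and \(\ones^\perp\) separately. Choosing \(c\) so that the entries on edges of \(\overline{G}\) match the dual constraints gives \(\lambda_{\max} \le n/\vartheta(\overline{G})\). Combining this with the general lower bound yields equality.
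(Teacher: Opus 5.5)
Your strategy is the standard one and is sound. The paper only cites this result from \cite{CarliCGR19a}, but Lemma~\ref{lem:largestevalue} uses the same ingredients. You cite Lovász for \(\vartheta(G)\vartheta(\overline G)\ge n\), project an optimal solution of \(\vartheta(\overline G)\) onto \(\mathcal{A}\), and convert the projection into a dual certificate for \(\vartheta(G)\). Your three facts about the projection are correct: Schur multiplication by \(A_{\overline G}=J-I-A_G\) is a combination of self-adjoint maps leaving \(\mathcal{A}\) invariant, so it commutes with the projection.

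The step that fails as written is \(X\ones=\frac{\vartheta(\overline G)}{n}\ones\). Saying \(X\) commutes with \(J\) ``because \(X\ones\) is constant'' is circular. The facts you then invoke do not yield \(XJ=JX\): closure of \(\mathcal{A}\) under transposition and \(\ones\) spanning an eigenspace of \(J\) hold equally for the full matrix algebra. That algebra is a (non-homogeneous) partially coherent algebra containing matrices that do not commute with \(J\). Since \(XJ=JX\) is equivalent to constant row sums, homogeneity must be used here, and your argument never uses it at this point. The fix is one line: \(XJ\in\mathcal{A}\) has constant diagonal, and \(\diag(XJ)=X\ones\). Then \(\ones^\T X\ones=\vartheta(\overline G)\) gives the eigenvalue. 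Alternatively, the optimality argument in Lemma~\ref{lem:largestevalue} derives \(X\ones=\frac{\vartheta}{n}\ones\) from constant diagonal alone.

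The final certificate also needs to be stated precisely, and you misidentify what the scalar is tuned for. The entries on \(E(\overline G)\) are automatically \(1\) because \(X\) vanishes there; what must be repaired is the diagonal. With \(s\coloneqq n/\vartheta(\overline G)\), set \(M\coloneqq J+sI-snX\).

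\begin{itemize}
\item Then \(M_{ii}=1+s-s=1\) and \(M_{ij}=1\) for \(ij\in E(\overline G)\), so \(\vartheta(G)\le\lambda_{\max}(M)\).
\item On \(\ones\), \(M\ones=(n+s-s\vartheta(\overline G))\ones=s\ones\).
\item On \(\ones^\perp\), which \(X\) leaves invariant, \(M=s(I-nX)\preceq sI\), using only \(X\succeq0\).
\end{itemize}

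Hence \(\vartheta(G)\le n/\vartheta(\overline G)\). With this and the row-sum repair, the proof is complete.
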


We may also obtain important information about the largest eigenvalue
of optimal solutions.

\begin{lemma}
  \label{lem:largestevalue}
  Let \(G\) be a graph on \(n\) vertices.
  If there exists an optimal solution \(\hat{X}\) for
  \(\vartheta \coloneqq\vartheta(G)\) in \cref{eq:thdef} with constant
  diagonal, then
  \begin{equation}
    \label{eq:X-lambdamax}
    \hat{X} \preceq \dfrac{\vartheta}{n} I,
  \end{equation}
  and
  \begin{equation}
    \label{eq:X-lambdamax-eq}
    \hat{X} \ones = \dfrac{\vartheta}{n} \ones.
  \end{equation}
  Moreover, if the adjacency matrix of \(G\) belongs to a homogeneous
  partially coherent algebra, then such solution exists.
\end{lemma}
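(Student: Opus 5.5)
The plan is to take an optimal \(\hat X\) with constant diagonal, so \(\hat X_{ii} = 1/n\) because \(\ip{I}{\hat X}=1\). I will first prove \cref{eq:X-lambdamax-eq} using complementary slackness with a dual optimal solution, then prove \cref{eq:X-lambdamax} using a rescaling argument, and finally obtain existence by projecting onto the algebra.

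For \cref{eq:X-lambdamax-eq}, I use the case \(k=1\) of the dual stated earlier. Since \(\ip{I}{X}=1\), the constraint \(X\preceq I\) is redundant, so we may take \(Y=0\). This gives \(\vartheta = \min\{\nu : \nu I + Z \succeq J,\ Z\schur A_G = Z\}\), and strong duality provides an optimal pair \((\vartheta, Z)\). Complementary slackness reads \(\ip{\vartheta I + Z - J}{\hat X} = 0\). Both matrices are positive semidefinite, so this upgrades to \((\vartheta I + Z - J)\hat X = 0\). Now take the \(i\)-th diagonal entry. We have \((Z\hat X)_{ii} = \sum_j Z_{ij}\hat X_{ji} = 0\), because \(Z\) is supported on edges and \(\hat X\) vanishes on edges. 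Hence \(\vartheta/n = \vartheta \hat X_{ii} = (J\hat X)_{ii} = (\hat X\ones)_i\) for every \(i\), which is exactly \(\hat X\ones = (\vartheta/n)\ones\).

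For \cref{eq:X-lambdamax}, fix a unit vector \(y\in\Reals^V\) and set \(D\coloneqq \sqrt{n}\,\mathrm{Diag}(y)\). The matrix \(D\hat X D\) is feasible in \cref{eq:thdef}, for three reasons:
\begin{itemize}
\item it is positive semidefinite;
\item it vanishes wherever \(\hat X\) does, in particular on edges;
\item its trace is \(\sum_i n y_i^2 \hat X_{ii} = \|y\|^2 = 1\).
\end{itemize}
Therefore \(\vartheta \ge \ip{J}{D\hat X D} = n\, y^\T \hat X y\). Taking the maximum over unit \(y\) gives \(\lambda_1(\hat X)\le \vartheta/n\).

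For existence, let \(X\) be any optimal solution and let \(\hat X\coloneqq P(X)\), where \(P\) is the orthogonal projection (trace inner product) onto the homogeneous partially coherent algebra \(\mathcal{A}_G\). The key properties are the following.
\begin{itemize}
\item Since \(I,J\in\mathcal{A}_G\), \(P\) preserves \(\ip{I}{\cdot}\) and \(\ip{J}{\cdot}\).
\item The map \(M\mapsto A_G\schur M\) is self-adjoint and maps \(\mathcal{A}_G\) into itself. Therefore it also preserves \(\mathcal{A}_G^\perp\), so it commutes with \(P\), and \(A_G\schur \hat X = P(A_G\schur X)=0\).
\item Homogeneity gives \(\hat X\) a constant diagonal.
\end{itemize}
The step I expect to be the main obstacle is showing \(P(X)\succeq 0\). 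I would argue that the orthogonal projection onto a unital self-adjoint matrix algebra is the trace-preserving conditional expectation, hence positive. Concretely, \(\mathcal{A}_G\) is a finite-dimensional \(*\)-algebra containing \(I\). For \(M\in\mathcal{A}_G\) with \(M\succeq 0\), write \(M=B^\T B\) with \(B\in\mathcal{A}_G\) (the square root stays in the algebra, being a polynomial in \(M\)). Then \(\ip{P(X)}{M} = \ip{X}{M}\ge 0\). Since \(P(X)\) lies in \(\mathcal{A}_G\) and pairs nonnegatively with every positive semidefinite element of \(\mathcal{A}_G\), testing against the spectral projections of \(P(X)\), which lie in \(\mathcal{A}_G\), shows that \(P(X)\succeq 0\). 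Thus \(\hat X\) is feasible with the same objective value, so it is optimal and has constant diagonal.
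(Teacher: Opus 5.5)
Your proof is correct, but you reach \eqref{eq:X-lambdamax-eq} by a different route from the paper.

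\textbf{The bound \eqref{eq:X-lambdamax}.} The paper proves this first, using exactly your rescaling: your matrix $D\hat X D$ is $n(\hat X\schur yy^\T)$. The only differences are that the paper takes $y$ to be a unit eigenvector and invokes the Schur product theorem, where you use congruence.

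\textbf{The eigenvector identity \eqref{eq:X-lambdamax-eq}.} The paper then reads this off from the equality case of the Rayleigh quotient: $\vartheta=\ones^\T\hat X\ones\le n\lambda_1(\hat X)\le\vartheta$, so $\ones$ is an eigenvector for $\vartheta/n$. You instead use dual attainment and complementary slackness. This is sound: the primal has the Slater point $\tfrac1n I$, or alternatively any dual optimum $(\nu,Y,Z)$ can be replaced by $(\nu+\lambda_1(Y),0,Z)$. It is, however, a detour, since your own second step already gives \eqref{eq:X-lambdamax-eq} in one line as above. What your route buys is a stronger identity, $\hat X\ones=\vartheta\,\diag(\hat X)$ for every optimal $\hat X$; the constant diagonal is needed only at the very end.

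\textbf{Existence.} You show that $M\mapsto A_G\schur M$ is self-adjoint and leaves $\mathcal{A}_G$, hence also $\mathcal{A}_G^{\perp}$, invariant, so it commutes with the projection. This neatly replaces the paper's orthonormal basis adapted to the supports of $I$, $A_G$ and $A_{\overline{G}}$. Your spectral-projection argument also proves the positivity of the projection, which the paper merely cites.

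Two small remarks on this part. First, you should note that $P(X)$ is symmetric before using its spectral projections. This holds because transposition is an isometry preserving the self-adjoint algebra $\mathcal{A}_G$, so it commutes with $P$. Second, the factorization $M=B^\T B$ with $B\in\mathcal{A}_G$ is unnecessary, since $\ip{X}{M}\ge0$ for any two positive semidefinite matrices.
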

\begin{proof}
  Let \(\hat{X}\) be an optimal solution for \cref{eq:thdef} with
  constant diagonal.
  Let \(h\) be a \(\lambda\)-eigenvector of \(\hat{X}\) such that
  \(\pnorm{h}=1\).
  Define
  \(\hat{Y} \coloneqq n(\hat{X} \schur hh^{\transp}) \succeq 0\).
  We show that \(\hat{Y}\) is feasible in \cref{eq:thdef}.
  If \(ij\) is an edge, then
  \(\hat{Y}_{ij} = n h_i h_j \hat{X}_{ij} = 0\).
  Since \(\hat{X}\) has constant diagonal, we have
  \(\diag(\hat{X}) = \tfrac{1}{n}\ones\), so
  \(\diag(\hat{Y}) = h \schur h\).
  Thus, \(\trace(\hat{Y}) = \pnorm{h}^2 = 1\).
  Now \(\hat{X}\) is optimal in \cref{eq:thdef}, so a comparison of
  the objective values of \(\hat{X}\) and \(\hat{Y}\) yields
  \begin{equation*}
    \vartheta
    =
    \iprod{J}{\hat{X}}
    \geq
    \iprod{J}{\hat{Y}} =
    n \iprod{J}{\hat{X} \schur hh^{\transp}}
    =
    n \iprod{\hat{X}}{hh^{\transp}}
    =
    n h^{\transp}\hat{X}h
    =
    n\lambda.
  \end{equation*}
  Thus, \cref{eq:X-lambdamax} is proved.
  Moreover, the Rayleigh quotient characterization of the largest
  eigenvalue of \(\hat{X}\) gives
  \[ \vartheta = \langle J , \hat{X} \rangle \leq n~
    \lambda_{\max}(\hat{X}) \leq n~\frac{\vartheta}{n} = \vartheta,\]
  thus \(\ones\) is an eigenvector for the eigenvalue
  \(\lambda_{\max}(\hat{X}) = \vartheta / n\), proving
  \eqref{eq:X-lambdamax-eq}.

  For the last part, take any optimal solution \(X'\) in
  \cref{eq:thdef}, and let \(\hat{X}\) be its orthogonal projection
  onto the homogeneous partially coherent algebra \(\mathcal{A}_G\).
  Since \(J,I \in \mathcal{A}_G\), we have
  \(\iprod{J}{X'} = \iprod{J}{\hat{X}}\) and
  \(\iprod{I}{X'} = \iprod{I}{\hat{X}}\).
  That \(\hat{X} \succeq 0\) follows from the standard fact that the
  projection of a positive semidefinite matrix onto a \(*\)-algebra is
  positive semidefinite; see for instance the discussion in
  \cite[Section 2]{CarliCGR19a}.
  It remains to show that \(A_G \circ \hat{X} = 0\).
  There is an orthonormal basis \(A_0,\dotsc,A_d\) of
  \(\mathcal{A}_G\) and an index \(k \in [d]\) such that \(A_0\) is a
  multiple of~\(I\), the support of each matrix in \(A_1,\dotsc,A_k\)
  is contained in the support of \(A_G\), and the support of each
  matrix in \(A_{k+1},\dotsc,A_d\) is contained in the support of
  \(A_{\overline{G}}\).
  Since \(A_G \circ X' = 0\), we have \(A_i \circ X' = 0\) and thus
  \(\ip{A_i}{X'} = 0\) for each \(i \in [k]\).
  Thus, some terms in the projection
  \(\hat{X} = \sum_{i=0}^d \ip{A_i}{X'} A_i\) can be dropped:
  \(\hat{X} = \ip{A_0}{X'} A_0 + \sum_{i=k+1}^d \ip{A_i}{X'} A_i\).
  This shows that \(A_G \circ \hat{X} = 0\).
\end{proof}

\subsection{The theta of a cartesian product}
\label{sec:cartesian}

We start this section addressing a subtle point.
In its original definition, \(\vartheta_k\) is presented only for
\(k \in \Integers_+\).
However, definition \eqref{eq:thkdef} is immediately extended for
\(k \in \Reals_+\), and we shall adopt this direction in this paper.
In fact, \citeauthor{SinjorgoS22a}
\cite[Section~3]{SinjorgoS22a} prove that
\(\vartheta_k(G) \leq n\) for all \(k\) (integer), and that
\(\vartheta_k(G) \leq \vartheta_{k+1}(G)\), with equality if and only
if \(\vartheta_k(G) = n\).
Carli Silva et al.~\cite[Lemma 1]{CarliCG21a} showed the same
holds for real \(k\).

\Nameandcite{GvozdenovicL08a} derive lower bounds on~\(\chi\)
from~\(\vartheta\) by computing \(\vartheta(G \Cartesian K_k)\) for
each integer \(k \geq 1\).
While doing so, they obtain \(\vartheta(G \Cartesian K_k)\) as the
optimal value of the following SDP
(see~\cite[Theorem~2.7]{GvozdenovicL08a}):
\begin{equation}
  \label{eq:theta-prod-def}
  \begin{alignedat}[t]{3}
    \vartheta(G \Cartesian K_k)
    =
    \text{Maximize \ } & k\ip{J}{X} + k(k-1)\ip{J}{Y},
    \\
    \text{subject to \ } & X,\, Y \in \Sym{V},
    \\
    & X \succeq Y,
    \\
    & X + (k-1)Y \succeq 0,
    \\
    & A_G \schur X = 0,
    \\
    & I \schur Y = 0,
    \\
    & k\ip{I}{X} = 1.
  \end{alignedat}
\end{equation}
This SDP is well defined for any real \(k \in [1,n]\).
We will adopt the notation \(\vartheta(G \Cartesian K_k)\) as
in~\cref{eq:theta-prod-def} for any real \(k \in [1,n]\), even though
there is no corresponding complete graph~\(K_k\) if \(k\) is not an
integer.

\Nameandcite{SinjorgoS22a}, while applying the approach by
\nameandcite{GvozdenovicL08a} to lower bound the chromatic number,
compared the parameter \(\vartheta(G\Cartesian K_k)\) with the
Narasimhan--Manber parameter \(\vartheta_k(G)\).
Here we extend a result they showed for graphs which are vertex- and
edge-transitive.

\begin{theorem}
  \label{thm:2}
  Let \(G\) be a graph on \(n\) vertices whose adjacency matrix
  belongs to a homogeneous partially coherent algebra, and let
  \(k \geq 1\) be a real number.
  Then
  \begin{equation}
    \label{eq:1}
    \vartheta(G \Cartesian K_k) = \min\{k\vartheta(G),n\} =
    \vartheta_k(G).
  \end{equation}
\end{theorem}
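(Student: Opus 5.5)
The plan is to prove the two equalities by sandwiching. We show that both \(\vartheta(G \Cartesian K_k)\) and \(\vartheta_k(G)\) are at most \(\min\{k\vartheta(G),n\}\). We then exhibit feasible solutions attaining this value in each of the SDPs \cref{eq:thkdef} and \cref{eq:theta-prod-def}.

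\emph{Upper bounds.} For \(\vartheta_k\), the bound \(\vartheta_k(G) \leq n\) is known for real \(k\) (Lemma~1 of \cite{CarliCG21a}). For the bound \(\vartheta_k(G) \leq k\vartheta(G)\), take any feasible \(X\) in \cref{eq:thkdef}. Then \(X/k\) is feasible in \cref{eq:thdef}, so \(\ip{J}{X} \leq k\vartheta(G)\).

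For \(\vartheta(G\Cartesian K_k)\) we argue from the formulation \cref{eq:theta-prod-def}. Let \((X,Y)\) be feasible and set \(W \coloneqq X+(k-1)Y \succeq 0\). The objective is \(k\ip{J}{W}\), and \(\diag(W)=\diag(X)\), so \(\ip{I}{W}=1/k\). However \(W\) need not vanish on edges, so we cannot simply compare \(W\) with \(\vartheta(G)\). Instead we use the dual of \cref{eq:thdef} for \(G\). Take an optimal dual certificate, i.e.\ a matrix \(Z\) supported on \(E\) with \(\vartheta(G) I + Z - J \succeq 0\), chosen in the algebra (constant diagonal).

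\begin{itemize}
\item To get \(k\vartheta(G)\): write \(\ip{J}{X}+(k-1)\ip{J}{Y}\) and bound each piece using \(X\succeq Y\) and \(W\succeq 0\). More simply, recall that \(\vartheta(G\Cartesian K_k) \leq k\,\vartheta(G)\) holds in general. The product \(G \Cartesian K_k\) is covered by \(k\) copies of \(G\), and \(\vartheta\) is subadditive over vertex covers by induced subgraphs. For the real-\(k\) SDP we redo this: for feasible \((X,Y)\), the matrix \(X\succeq 0\) (since \(X \succeq Y\) and \(X+(k-1)Y\succeq 0\) give \(kX \succeq 0\)) is zero on edges with trace \(1/k\), so \(\ip{J}{X}\leq \vartheta(G)/k\). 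The remaining term \((k-1)\ip{J}{Y}\) is controlled by \(\ip{J}{Y}\le \ip{J}{X}\). Together these give \(k\ip{J}{X}+k(k-1)\ip{J}{Y}\le k^2\ip{J}{X}\le k\vartheta(G)\).
\item To get \(n\): since \(\ip{J}{W}\le n\lambda_{\max}(W)\), we need \(\lambda_{\max}(W)\le 1/k\). This holds because \(X-Y\succeq0\) together with \(W\succeq 0\) gives \(kX = W + (k-1)(X-Y) \succeq W\). Hence \(\ip{J}{W}\leq\ip{J}{kX}\). Here I would instead use the Gvozdenović–Laurent bound \(\vartheta(G\Cartesian K_k)\le n\). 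This follows from \(W\) having trace \(1/k\), combined with \(\ip{J}{W} \le \ip{J}{kX}\) and a Ky Fan argument over the \(k\) blocks of the product (the lifted matrix \(\begin{pmatrix}X & Y\\ Y & X\end{pmatrix}\)-type structure has trace \(1\) and \(\ones\)-value at most \(n\) times its largest eigenvalue, which is at most \(1/n\)-scaled).
\end{itemize}

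This \(n\) bound is the step I expect to be the main obstacle. Making it rigorous for real \(k\) may require an explicit dual solution of \cref{eq:theta-prod-def}.

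\emph{Lower bounds via the algebra.} By \cref{lem:largestevalue}, there is an optimal \(\hat X\) for \(\vartheta(G)\) with constant diagonal \(1/n\), \(\hat X\preceq (\vartheta/n)I\), and \(\hat X\ones=(\vartheta/n)\ones\). We split into two cases.

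\emph{Case \(k\vartheta\le n\).} Take \(X\coloneqq k\hat X\) in \cref{eq:thkdef}. It satisfies \(X \preceq (k\vartheta/n) I \preceq I\), \(\ip{I}{X}=k\), vanishes on edges, and gives objective \(k\vartheta\). For the product, take \(X'\coloneqq \hat X/k\) and \(Y'\coloneqq \frac{1}{k}\bigl(\frac{1}{n}J-\frac1n I\bigr)\cdot c\)-type matrices. A cleaner choice is \(X'\coloneqq\hat X/k\) and \(Y'\coloneqq \frac{1}{k(k-1)}\cdot\) (a scalar multiple of \(J-I\)) tuned so that \(X'-Y'\succeq0\) using \(\lambda_{\min}\)-information. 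I would rather use the known closed form: the lower bound \(\vartheta(G\Cartesian K_k)\ge\min\{k\vartheta,n\}\) follows from the explicit Gvozdenović–Laurent solution, which works whenever \(\ones\) is an eigenvector of \(\hat X\) with eigenvalue \(\lambda_{\max}\). That is exactly what \cref{lem:largestevalue} provides.

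\emph{Case \(k\vartheta> n\).} Use \(\vartheta_{k}\ge\vartheta_{n/\vartheta}=n\) by monotonicity in \(k\), and similarly for the product.

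Checking \(X'\succeq Y'\) and \(X'+(k-1)Y'\succeq 0\) reduces to eigenvalue checks on the common eigenbasis of \(\hat X\), \(I\) and \(J\). These commute because \(\hat X\ones\) is a multiple of \(\ones\).
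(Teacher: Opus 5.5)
\paragraph{Verdict.} Your bounds \(\vartheta_k(G)\le\min\{k\vartheta(G),n\}\) and \(\vartheta(G\Cartesian K_k)\le k\vartheta(G)\) are correct and are essentially the paper's arguments. So is your lower bound \(\vartheta_k(G)\ge\min\{k\vartheta(G),n\}\), obtained via \(k\hat X\) plus monotonicity in \(k\). The genuine gap is the lower bound \(\vartheta(G\Cartesian K_k)\ge\min\{k\vartheta(G),n\}\), which is the heart of the theorem: you never produce a feasible pair for \cref{eq:theta-prod-def}.

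\paragraph{Why your candidate fails.} Both concrete candidates you write down take \(Y'\) to be a multiple \(c(J-I)\), and these fail in general. If \(v\in\ones^\perp\) lies in the kernel of \(\hat X\), then \(v^\T\bigl(\hat X/k+(k-1)c(J-I)\bigr)v=-(k-1)c\|v\|^2\). This forces \(c\le0\), and hence an objective value \(\vartheta+k(k-1)c\,n(n-1)\le\vartheta\). This already happens for \(C_5\), which is vertex-transitive and whose circulant optimal \(\hat X\) has rank \(3\). Appealing to ``the explicit Gvozdenovi\'c--Laurent solution'' does not fill the hole, since exhibiting such a solution for this class is exactly what must be proved.

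\paragraph{The missing construction.} The missing idea is to take \(Y\) in the span of \(J\) and \(\hat X\) rather than \(J\) and \(I\). Since \(\diag\hat X=\frac1n\ones\), the matrix \(\frac1nJ-\hat X\) has zero diagonal. With \(\lambda\coloneqq\max\{k,n/\vartheta\}\) and \(k>1\), the paper sets
\[
  X_0\coloneqq \tfrac1k\hat X,\qquad
  Y_0\coloneqq \tfrac{1}{k(\lambda-1)}\bigl(\tfrac1nJ-\hat X\bigr).
\]
Then \(X_0-Y_0\) is a positive multiple of \(\lambda\hat X-\frac1nJ\). This is positive semidefinite on \(\ones^\perp\), and by \cref{eq:X-lambdamax-eq} it has eigenvalue \(\lambda\vartheta/n-1\ge0\) on \(\ones\). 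Next, \(X_0+(k-1)Y_0\) is a positive multiple of \((\lambda-k)\hat X+\frac{k-1}{n}J\succeq0\). The objective equals \(\vartheta+\frac{k-1}{\lambda-1}(n-\vartheta)=\min\{k\vartheta,n\}\). This covers both regimes at once. So your case split for the product is unnecessary; it would also have needed a justification of monotonicity of \cref{eq:theta-prod-def} in real \(k\), which you did not give.

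\paragraph{The bound by \(n\).} You leave \(\vartheta(G\Cartesian K_k)\le n\) open, and the reason you give for \(\lambda_{\max}(W)\le1/k\), namely \(kX\succeq W\), does not imply it. No dual solution is needed. You already observed \(W=X+(k-1)Y\succeq0\) and \(\trace W=1/k\), so \(\lambda_{\max}(W)\le\trace W=1/k\). Hence the objective is \(k\ip{J}{W}\le kn\,\lambda_{\max}(W)\le n\). Equivalently, \(\ip{W}{nI-J}\ge0\), which is exactly how the paper argues.
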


\begin{proof}
  Throughout this proof we denote \(\vartheta \coloneqq \vartheta(G)\)
  and \(\overline{\vartheta} \coloneqq \vartheta(\overline{G})\).

  We first prove that
  \begin{equation}
    \label{eq:main-thetak}
    \vartheta_k(G) = \min\{k\vartheta(G),n\},
  \end{equation}
  starting with `\(\leq\)'.
  If \(X\) is feasible in \cref{eq:thkdef}, then \(X \preceq I\) so
  its objective value is \(\ip{J}{X} \leq \ip{J}{I} = n\).
  Moreover, \(\tfrac{1}{k}X\) is feasible in \cref{eq:thdef} with
  objective value \(\tfrac{1}{k}\ip{J}{X}\).
  This shows that \(\vartheta \geq \tfrac{1}{k}\vartheta_k(G)\).
  Now we prove `\(\geq\)'.
  By Lemma~\ref{lem:largestevalue}, there is an optimal solution
  \(\hat{X}\) for \cref{eq:thdef} with constant diagonal.
  We will show that
  \(\hat{Z} \coloneqq \min\{k,\overline{\vartheta}\}~\hat{X}\) is
  feasible in~\cref{eq:thkdef} having as its objective value the RHS
  of~\cref{eq:main-thetak}.
  From Theorem~\ref{thm:ththbar} we have
  \(n = \vartheta\overline{\vartheta}\), so \cref{eq:X-lambdamax}
  shows that \(\hat{X} \preceq (1/\overline{\vartheta}) I\).
  Hence, \(\hat{Z} \preceq \overline{\vartheta} \hat{X} \preceq I\),
  so \(\hat{Z}\) is feasible in~\cref{eq:thkdef} with trace constraint
  relaxed to \(\leq k\).
  Its objective value is
  \(\iprod{J}{\hat{Z}} =
  \min\{k,\overline{\vartheta}\}\iprod{J}{\hat{X}} =
  \min\{k,\overline{\vartheta}\}\,\vartheta = \min\{k\vartheta,n\}\).
  This concludes the proof of~\cref{eq:main-thetak}.

  Now we prove
  \begin{equation}
    \label{eq:main-theta-prod}
    \vartheta(G \Cartesian K_k) = \min\{k\vartheta(G),n\},
  \end{equation}
  again starting with `\(\leq\)'.
  Let \((\hat{X},\hat{Y})\) be feasible in~\cref{eq:theta-prod-def}.
  Note that \(\hat{X}\succeq0\) follows from
  \(\hat{X}+(k-1)\hat{Y}\succeq0\) and \(\hat{X}-\hat{Y}\succeq0\) via
  \(\hat{X} = \tfrac{1}{k}(\hat{X}+(k-1)\hat{Y} +
  (k-1)(\hat{X}-\hat{Y}))\).
  Thus \(k\hat{X}\) is feasible in~\cref{eq:thdef}, so the objective
  value of \((\hat{X},\hat{Y})\) is
  \begin{equation*}
    k\iprod{J}{\hat{X}} + k(k-1)\iprod{J}{\hat{Y}}
    \leq
    k\iprod{J}{\hat{X}} + k(k-1)\iprod{J}{\hat{X}}
    =
    k\iprod{J}{k\hat{X}}
    \leq
    k\vartheta.
  \end{equation*}
  Expanding the inner product
  \(\iprod[\big]{\hat{X}+(k-1)\hat{Y}}{k(nI-J)} \geq 0\) while using
  \(\iprod{I}{\hat{Y}} = 0\) and \(k\iprod{I}{\hat{X}} = 1\) shows
  that the objective value of \((\hat{X},\hat{Y})\) is \(\leq n\).

  It remains to prove `\(\geq\)' in~\cref{eq:main-theta-prod}.
  Let \(\hat{X}\) be an optimal solution for \(\vartheta\) in
  \cref{eq:thdef} with constant diagonal, which exists by
  Lemma~\ref{lem:largestevalue}.
  If \(k=1\), then \cref{eq:theta-prod-def} reduces to the usual
  SDP~\cref{eq:thdef} for \(\vartheta(G)\), so assume \(k>1\).
  Let \(\lambda \coloneqq \max\{k , \overline{\vartheta}\}>1\).
  Define
  \[
    X_0 \coloneqq \frac1k \hat{X},
    \qquad
    Y_0 \coloneqq
    \frac{1}{k(\lambda-1)}
    \left(\frac1n J-\hat{X}\right).
  \]
  We claim that \((X_0,Y_0)\) is feasible in~\cref{eq:theta-prod-def},
  with objective value \(\min\{k \vartheta , n \}\).
  The constraints \(A_G \circ X_0 = 0\), \(I \circ Y_0 = 0\), and
  \(k \langle I , X_0 \rangle = 1\) follow immediately from the
  definition of \(\hat X\).
  It remains to check the two semidefinite constraints.
  We have
  \[
    X_0-Y_0
    =
    \frac{1}{k(\lambda-1)}
    \left(\lambda\hat{X}-\frac1n J\right).
  \]
  In the orthogonal complement of \(\ones\) this is clearly positive
  semidefinite, and \eqref{eq:X-lambdamax-eq} implies that the vector
  \(\ones\) is an eigenvector of \(\lambda\hat{X}-\tfrac{1}{n}J\) with
  eigenvalue
  \[
    \frac{\lambda\vartheta}{n}-1,
  \]
  which is nonnegative because
  \[
    \lambda\geq\overline{\vartheta}=\frac{n}{\vartheta}.
  \]
  Thus \(X_0-Y_0 \succeq 0\). Similarly,
  \[
    X_0+(k-1)Y_0 =
    \frac{1}{k(\lambda-1)}
    \left((\lambda-k)\hat{X} + \frac{k-1}{n}J \right).
  \]
  Since \(\lambda\geq k\), and since both \(\hat{X}\) and \(J\) are
  positive semidefinite, we get
  \[
    X_0+(k-1)Y_0 \succeq 0.
  \]
  We now compute its objective value:
  \begin{align*}
    k\iprod{J}{X_0} + k(k-1)\iprod{J}{Y_0}
    &= \iprod{J}{\hat{X}} + \frac{k-1}{\lambda-1}
    \iprod[\Big]{J}{\frac1nJ-\hat{X}}
    = \vartheta + \frac{k-1}{\lambda-1}(n-\vartheta).
  \end{align*}
  If \(\lambda=\overline{\vartheta}=n/\vartheta\), then
  \[ \vartheta +\frac{k-1}{\overline{\vartheta}-1}(n-\vartheta) =
    \vartheta + (k-1)\vartheta
    = k\vartheta.
  \]
  If \(\lambda=k\), then
  \[
    \vartheta + \frac{k-1}{k-1}(n-\vartheta)
    = n.
  \]
  Since \(\lambda=\max\{k,\overline{\vartheta}\}\), these two cases
  give exactly
  \[
    k\iprod{J}{X_0} + k(k-1)\iprod{J}{Y_0}
    = \min\{k\vartheta,n\}.
  \]
  Therefore
  \[
    \vartheta(G\Cartesian K_k)\geq \min\{k\vartheta,n\}.
  \]
  Together with the upper bound already proved, this gives
  \[
    \vartheta(G\Cartesian K_k)=\min\{k\vartheta(G),n\}.\qedhere
  \]
\end{proof}

\subsection{The \texorpdfstring{\(\mu\)}{μ} parameter}
\label{sec:mu}

Let \(G\) be a graph on \(n\) vertices.
Define
\begin{equation}
  \label{eq:4}
  \mu(G) \coloneqq \min\setst[\big]{k \in [1,n]}{\vartheta_k(G) = n}.
\end{equation}
Note that~\cite[Equation~(39)]{SinjorgoS22a} defines
\[
  \Psi_{\vartheta_k}(G)
  =
  \min\setst{k \in \Naturals}{\vartheta_k(G) = n};
\]
the parameter \(\mu(G)\) is the continuous extension of this, with
\(\ceiling{\mu(G)} = \Psi_{\vartheta_k}(G)\).

From \(\alpha_k(G) \leq \vartheta_k(G)\), it follows that
\[
  \mu(G) \leq \chi(G).
\]
The well known fact that \(\vartheta(\overline{G}) \leq \chi(G)\) motivates
the question:
\[\text{How do \(\mu(G)\) and \(\vartheta(\overline{G})\) compare?}\]
In order to address this question, we first introduce a formulation of
\(\mu(G)\) as an SDP, which follows immediately from
\eqref{eq:thkdef} and relies crucially on allowing a continuous range
for \(k\) in \([1,n]\):
\begin{equation}
  \label{eq:mudef}
  \mu(G) = \min\Big\{\, \ip{I}{X} : 0 \preccurlyeq X \preccurlyeq I,\,
  A_G \circ X = 0, \, \ip{J}{X} = n \,\Big\}.
\end{equation}

\begin{theorem}
  \label{thm:mu}
  Let \(G = (V,E)\) be a graph on \(n\) vertices.
  If there exists an optimal solution for
  \(\vartheta(G)\) in \cref{eq:thdef} with constant
  diagonal, then
  \[
    \mu(G) = \frac{n}{\vartheta(G)}.
  \]
\end{theorem}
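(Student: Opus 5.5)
The plan is to prove the two inequalities \(\mu(G)\geq n/\vartheta(G)\) and \(\mu(G)\leq n/\vartheta(G)\) separately, working directly with the SDP formulation~\eqref{eq:mudef}. The lower bound holds for every graph. The hypothesis about a constant-diagonal optimal solution is needed only for the upper bound, where it enters through Lemma~\ref{lem:largestevalue}.

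For the lower bound, I would take any feasible \(X\) in~\eqref{eq:mudef} and set \(k\coloneqq\ip{I}{X}\). Then \(k>0\), since \(X\succeq 0\) and \(\ip{J}{X}=n\neq 0\) force \(X\neq 0\). The matrix \(\tfrac1k X\) is feasible in~\eqref{eq:thdef}: it is positive semidefinite, has unit trace, and satisfies \(A_G\schur \tfrac1kX=0\). Its objective value is \(n/k\), so \(n/k\leq\vartheta(G)\), that is, \(\ip{I}{X}\geq n/\vartheta(G)\). This is the same scaling argument as the `\(\leq\)' half of~\eqref{eq:main-thetak}, namely \(\vartheta_k(G)\leq k\vartheta(G)\). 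Taking the minimum over feasible \(X\) gives \(\mu(G)\geq n/\vartheta(G)\).

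For the upper bound, let \(\hat X\) be an optimal solution of~\eqref{eq:thdef} with constant diagonal, as in the hypothesis. Lemma~\ref{lem:largestevalue} gives \(\hat X\preceq \tfrac{\vartheta(G)}{n}I\). Define \(Z\coloneqq \tfrac{n}{\vartheta(G)}\hat X\). Then:
\begin{itemize}
\item \(0\preceq Z\preceq I\) and \(A_G\schur Z=0\);
\item \(\ip{J}{Z}=\tfrac{n}{\vartheta(G)}\,\vartheta(G)=n\);
\item \(\ip{I}{Z}=n/\vartheta(G)\).
\end{itemize}
So \(Z\) is feasible in~\eqref{eq:mudef}, and therefore \(\mu(G)\leq n/\vartheta(G)\).

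To confirm that~\eqref{eq:mudef} really computes \(\mu(G)\) as defined in~\eqref{eq:4}, I would check two points. First, \(n/\vartheta(G)\in[1,n]\), because \(1\leq\vartheta(G)\leq n\). Second, the minimum is attained, since the feasible set is compact (it lies in \(\{0\preceq X\preceq I\}\)) and nonempty, as the matrix \(Z\) shows. Via \(\vartheta_k\) this reads: \(\vartheta_{n/\vartheta}(G)=n\), while \(\vartheta_k(G)\leq k\vartheta(G)<n\) for \(k<n/\vartheta(G)\).

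I do not expect a real obstacle, since the key spectral bound \(\hat X\preceq\tfrac{\vartheta}{n}I\) is already supplied by Lemma~\ref{lem:largestevalue}. The only care needed is that \(k\) is allowed to range over the reals, so that \(n/\vartheta(G)\) is an admissible value of \(k\) and~\eqref{eq:mudef} is a faithful reformulation of~\eqref{eq:4}.
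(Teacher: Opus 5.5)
Your proof is correct and follows the paper's argument. The upper bound is exactly the paper's use of Lemma~\ref{lem:largestevalue} to make \((n/\vartheta)\hat X\) feasible in~\eqref{eq:mudef}, and your lower bound is the same scaling argument with a slightly different normalization: you divide by the trace and compare with~\eqref{eq:thdef} directly, whereas the paper divides by \(n\) and invokes the reformulation~\eqref{eq:2} of \(1/\vartheta(G)\), so your version avoids~\eqref{eq:2}. Your explicit check that~\eqref{eq:mudef} agrees with~\eqref{eq:4} is a detail the paper leaves implicit.
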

\begin{proof}
  Let \(\hat{X}\) be an optimal solution for
  \(\vartheta \coloneqq\vartheta(G)\) in \cref{eq:thdef} with constant
  diagonal.
  From Lemma~\ref{lem:largestevalue} it follows that
  \((n / \vartheta) \hat{X}\) is feasible for \eqref{eq:mudef} with
  objective value \(n/\vartheta\), thus
  \[
    \mu(G) \leq \frac{n}{\vartheta}.
  \]
  On the other hand, it is known (and easy to see) that
  \begin{equation}
    \label{eq:2}
    \frac{1}{\vartheta} =  \min \Big\{\, \ip{I}{Y} : \ip{J}{Y} = 1,\,
      A_G \circ Y = 0, \, Y \succeq 0 \, \Big\}.
  \end{equation}
  Thus, if \(X\) is feasible in~\cref{eq:mudef}, then
  \(\tfrac{1}{n}X\) is feasible in~\cref{eq:2} with objective value
  \(\tfrac{1}{n}\ip{I}{X}\).
  Hence
  \[
    \frac{1}{\vartheta} \leq \frac{\mu(G)}{n}.\qedhere
  \]
\end{proof}

The second part of the proof above shows that, for any graph~\(G\) on
\(n\) vertices,
\begin{equation}
  \label{eq:6}
  \vartheta(G)\mu(G) \geq n.
\end{equation}
\begin{corollary}
  If \(G\) is a graph whose adjacency matrix belongs to a homogeneous
  partially coherent algebra, then
  \[\mu(G) = \vartheta(\overline{G}). \]
\end{corollary}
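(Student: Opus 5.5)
The corollary follows by chaining three results already established. By the last part of Lemma~\ref{lem:largestevalue}, since the adjacency matrix of \(G\) belongs to a homogeneous partially coherent algebra, \(\vartheta(G)\) in \cref{eq:thdef} has an optimal solution with constant diagonal. That is exactly the hypothesis of Theorem~\ref{thm:mu}, so we get \(\mu(G) = n/\vartheta(G)\).

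Next, Theorem~\ref{thm:ththbar} applies to the same class of graphs and gives \(\vartheta(G)\vartheta(\overline{G}) = n\). Since \(\vartheta(G) \geq 1 > 0\), we may divide by \(\vartheta(G)\) to get \(n/\vartheta(G) = \vartheta(\overline{G})\). Combining the two equalities yields \(\mu(G) = \vartheta(\overline{G})\).

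No step here is a real obstacle, because the substantive work lives in the cited results. The one point to check is that both cited statements use the same hypothesis. Lemma~\ref{lem:largestevalue} and Theorem~\ref{thm:ththbar} are both phrased for graphs whose adjacency matrix belongs to a homogeneous partially coherent algebra, so they match the corollary's assumption, and no extra conditions such as connectivity are needed.
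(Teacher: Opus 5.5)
Your proof is correct and is exactly the paper's argument: the last part of Lemma~\ref{lem:largestevalue} supplies the constant-diagonal optimal solution required by Theorem~\ref{thm:mu}, which gives \(\mu(G) = n/\vartheta(G)\), and Theorem~\ref{thm:ththbar} then turns this into \(\vartheta(\overline{G})\). The paper only states that the corollary is immediate from these results, so your write-up makes that chain explicit.
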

\begin{proof}
  Immediate from the result above, with Theorem~\ref{thm:ththbar} and
  Lemma~\ref{lem:largestevalue}.
\end{proof}

Computations by Pedro Cipriano first suggested that there are graphs
for which \(\mu(G) \neq \vartheta(\overline{G})\).
Indeed, an explicit example is obtained by taking \(G\) to be the
disjoint union \(K_2 \cup K_1\), so that \(\overline G = K_{1,2}\).
Thus \(\vartheta(\overline G) = 2\).
However, since
\[
  X\coloneqq\frac13
  \begin{bmatrix}
    2 & 0 & 1\\
    0 & 2 & 1\\
    1 & 1 & 1
  \end{bmatrix}
\]
is feasible in~\cref{eq:mudef} with objective value \(5/3\), we obtain
\(\mu(G) \leq 5/3 < 2 = \vartheta(\overline G)\).

We were not able to find an example in which
\(\mu(G) > \vartheta(\overline{G})\), thus we are willing to conjecture that
for all graphs, \(\mu(G) \leq \vartheta(\overline{G})\).

Using~\cref{eq:6} and the fact that
\(\vartheta(G)\vartheta(\overline G) \geq n\) for every graph \(G\) on \(n\)
vertices, if the conjecture holds, we would get
\(
  \frac{n}{\vartheta(G)} \leq \mu(G) \leq \vartheta(\overline G).
\)

\section{The \texorpdfstring{\(\varphi_k\)}{φₖ} parameter and orthonormal representations}
\label{sec:varphi}

Among the many equivalent definitions of \(\vartheta(G)\), we draw
attention to the following (see \cite[Theorem 6]{Lovasz79a})
\[
  \vartheta(G) = \max \left\{ \, \lambda_1(Y) : Y \in \Psd{V}, \, \diag
    Y = \ones, \, A_G \circ Y = 0\right\}.
\]
Motivated by this, and in the hopes of finding an equivalent
characterization of \(\vartheta_k\), we introduce the parameter below
for any positive integer \(k\):
\begin{equation}
  \label{def:varphi}
  \varphi_k(G) \coloneqq \max \left\{ \, \sum_{i = 1}^k
    \lambda_i(Y) : Y \in \Psd{V}, \, \diag Y = \ones, \,
    A_G \circ Y = 0 \right\}.
\end{equation}
By Ky Fan's characterization (see~\cref{eq:kyfan}), we may
extend this to nonnegative real~\(k\) by
\begin{equation}
  \label{def:varphi2}
  \varphi_k(G) \coloneqq \max \left\{ \, \ip{M}{Y} : M,Y \in \Psd{V},\,
    \diag Y = \ones, \, A_G \circ Y = 0, \, \ip{I}{M} = k,\,
    M \preccurlyeq I \right\}.
\end{equation}

\begin{theorem}
  \label{thm:1}
  For any graph \(G\) and positive integer \(k\),
  \[
    \alpha_k(G) \leq \varphi_k(G) \leq \vartheta_k(G).
  \]
  The second inequality also holds for \(k\) a nonnegative real.
\end{theorem}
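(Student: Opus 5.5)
For the first inequality $\alpha_k(G)\le\varphi_k(G)$, with $k$ a positive integer, we exhibit a feasible $Y$ in \cref{def:varphi}. Let $S\subseteq V$ induce a $k$-colorable subgraph with $\card{S}=\alpha_k(G)$. Partition $S$ into color classes $S_1,\dots,S_k$, each an independent set in $G$ (some possibly empty). Set
\[
  Y \coloneqq \sum_{j=1}^k \ones_{S_j}\ones_{S_j}^{\transp} + \sum_{v\in V\setminus S} e_v e_v^{\transp}.
\]
We check the constraints of \cref{def:varphi} in turn:
\begin{itemize}
\item $Y$ is a sum of rank-one positive semidefinite matrices, so $Y\succeq 0$.
\item Every vertex lies in exactly one of the sets $S_1,\dots,S_k$, $\{v\}$ ($v\notin S$), so $\diag Y=\ones$.
\item The off-diagonal support of $Y$ consists of pairs inside a common $S_j$. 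These pairs are non-adjacent, so $A_G\circ Y=0$.
\end{itemize}
The blocks $\ones_{S_j}\ones_{S_j}^{\transp}$ have pairwise orthogonal ranges, so the nonzero eigenvalues of $Y$ include $\card{S_1},\dots,\card{S_k}$. Hence $\sum_{i=1}^k\lambda_i(Y)\ge\sum_j\card{S_j}=\alpha_k(G)$.

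For the second inequality, with $k$ real, we use the formulation \cref{def:varphi2}. Take a feasible pair $(M,Y)$ and set $X\coloneqq M\circ Y$. We claim $X$ is feasible in \cref{eq:thkdef} with objective value $\ip{J}{X}=\ip{J}{M\circ Y}=\ip{M}{Y}$; maximizing over $(M,Y)$ then gives $\varphi_k(G)\le\vartheta_k(G)$. The constraints are checked as follows:
\begin{itemize}
\item $X\succeq 0$ by the Schur product theorem, since $M,Y\succeq 0$.
\item $A_G\circ X=M\circ(A_G\circ Y)=0$.
\item Since $\diag Y=\ones$, we get $\ip{I}{X}=\sum_i M_{ii}Y_{ii}=\ip{I}{M}=k$.
\item The only nontrivial constraint is $X\preceq I$. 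Because $\diag Y=\ones$ we have $I\circ Y=I$. Therefore
\[
  I-X=(I-M)\circ Y,
\]
which is positive semidefinite by the Schur product theorem, using $I-M\succeq 0$ and $Y\succeq 0$.
\end{itemize}

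The step needing care is this upper bound $X\preceq I$. It is resolved by the identity $I\circ Y=I$, which is exactly where the unit-diagonal constraint of \cref{def:varphi2} is used. The rest is routine.
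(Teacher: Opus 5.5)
Your proof is correct and follows the paper's argument: the block matrix of ones on the color classes (with ones on the remaining diagonal) for the first inequality, and the Schur product \(X = M\circ Y\) with \(I - X = (I-M)\circ Y\) for the second. The only cosmetic difference is that you work with an arbitrary feasible pair rather than an optimal one, so you never need the optimum to be attained.
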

\begin{proof}
  To see the first inequality, consider a maximum induced subgraph of
  \(G\) that is \(k\)-colorable, and define the matrix \(Y \in \Sym{V}\)
  by introducing square blocks of \(1\)s corresponding to each color
  class, padding the remaining diagonal entries also with~\(1\)s, and
  \(0\)s otherwise.
  It is immediate to see that \(Y\) is feasible for \eqref{def:varphi}
  with objective value \(\alpha_k\).

  For the second inequality, let \(k\) be a nonnegative real, and let
  \((\hat{Y},\hat{M})\) be optimal in \eqref{def:varphi2}.
  Let \(\hat{X} \coloneqq \hat{Y} \circ \hat{M}\).
  We will prove that \(\hat{X}\) is feasible for \eqref{eq:thkdef}
  with objective value \(\varphi_k(G)\).
  We have \(\iprod{I}{\hat{X}} = \iprod{I}{\hat{M}} = k\) using that
  \(\diag \hat{Y} = \ones\).
  Since \(A_G \circ \hat{Y} = 0\), we also have
  \(A_G \circ \hat{X} = 0\).
  That \(\hat{X}\) is positive semidefinite follows from a theorem of
  Schur (see \cite{Schur11a}), as both \(\hat{Y}\) and
  \(\hat{M}\) are positive semidefinite.
  Finally, \(\diag \hat{Y} = \ones\) implies that
  \[
    I - \hat{X}= I - (\hat{Y} \circ \hat{M}) = \hat{Y} \circ (I - \hat{M});
  \]
  since both of these factors are positive semidefinite, it follows
  that \(\hat{X} \preceq I\).
  Thus, \(\hat{X}\) is feasible, and its objective value is
  \[
    \ip{J}{\hat{X}} = \ip{J}{\hat{Y} \circ \hat{M}} = \ip{\hat{Y}}{\hat{M}} =
    \varphi_k(G).\qedhere
  \]
\end{proof}

We now relate \(\varphi_k\) to orthonormal representations.
An \emph{orthonormal representation} of \(G\) is a map
\(u\colon V(G)\to \mathbb{R}^m\) (for some \(m\)) such that
\(\|u_i\|=1\) for all \(i\in V(G)\) and \(\langle u_i,u_j\rangle =0\)
whenever \(ij\in E(\overline{G})\).

We provide an equivalent definition for \(\varphi_k\), following the
known correspondence between \(\vartheta = \varphi_1\) and orthonormal
representations.

\begin{theorem}
  \label{thm:varphi-orth-rep}
  Let \(G = (V,E)\) be a graph on \(n\) vertices. Then
  \[
    \varphi_k(G) = \max \left\{ \, \sum_{i \in V} u_i^\T M u_i :
      \begin{array}{l}
        u \text{ is an \(n\)-dim orthonormal representation of }\overline{G}, \\
        \ip{I}{M} = k, \, 0 \preccurlyeq M \preccurlyeq I .
      \end{array} \right\}.
  \]
\end{theorem}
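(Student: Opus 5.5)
The plan is to use the Gram matrix correspondence between feasible \(Y\) in \eqref{def:varphi2} and orthonormal representations of \(\overline{G}\), and then rewrite the objective \(\ip{M}{Y}\) as \(\sum_i u_i^\T M u_i\) after a suitable change of basis.

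First direction (\(\le\)). Take an optimal pair \((Y,M)\) in \eqref{def:varphi2}. Since \(Y \in \Psd{V}\), factor \(Y = U^\T U\) with \(U \in \Reals^{V\times V}\), for example \(U = Y^{1/2}\). Let \(u_i\) be the \(i\)th column of \(U\). Then \(\|u_i\|^2 = Y_{ii} = 1\), and \(\ip{u_i}{u_j} = Y_{ij} = 0\) whenever \(ij \in E\). Edges of \(G\) are exactly the non-edges of \(\overline{G}\), so \(u\) is an \(n\)-dimensional orthonormal representation of \(\overline{G}\). The objective is \(\ip{M}{Y} = \trace(MU^\T U) = \trace(UMU^\T)\), which is not yet of the form \(\sum_i u_i^\T M u_i\). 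To fix this, diagonalize \(Y = Q D Q^\T\) and take \(U = D^{1/2}Q^\T\), so that \(Y = U^\T U\) still holds. Now replace \(M\) by \(M' \coloneqq Q^\T M Q\). This \(M'\) still satisfies \(0 \preceq M' \preceq I\) and \(\ip{I}{M'} = k\). Moreover
\[
  \ip{M}{Y} = \trace(MQDQ^\T) = \trace(M'D) = \trace(U M' U^\T) = \sum_i u_i^\T M' u_i ,
\]
using that \(D\) is diagonal. This step fails for a general factorization, and the point is that it works because \(U^\T U\) and \(UU^\T\) coincide up to conjugation by \(Q\) here.

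The cleaner way to see this is by symmetry. For any factorization \(Y = U^\T U\) with square \(U\), the matrices \(UU^\T\) and \(U^\T U\) have the same spectrum, so they are orthogonally similar: \(UU^\T = P\,Y\,P^\T\) for some orthogonal \(P\). Setting \(M' = P M P^\T\) gives \(\sum_i u_i^\T M' u_i = \ip{M'}{UU^\T} = \ip{M}{Y}\). The constraint set \(\{M : \ip{I}{M}=k,\ 0\preceq M\preceq I\}\) is invariant under orthogonal conjugation, so \(M'\) remains feasible. This proves \(\varphi_k(G) \le\) RHS.

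Second direction (\(\ge\)). Given an \(n\)-dimensional orthonormal representation \(u\) of \(\overline{G}\) and a feasible \(M\), form the \(n\times n\) matrix \(U\) with columns \(u_i\). Set \(Y \coloneqq U^\T U\), the Gram matrix. It is positive semidefinite, has unit diagonal, and has zeros on the edges of \(G\), so it is feasible in \eqref{def:varphi2}. Then \(\sum_i u_i^\T M u_i = \ip{M}{UU^\T}\). Choose an orthogonal \(P\) with \(UU^\T = P Y P^\T\), and set \(M'' = P^\T M P\), which is again feasible. This gives \(\ip{M''}{Y} = \ip{M}{UU^\T}\), so the value is attained in \eqref{def:varphi2}.

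The main obstacle is exactly this mismatch between \(UU^\T\) (what the sum \(\sum_i u_i^\T M u_i\) sees) and \(U^\T U = Y\) (what \eqref{def:varphi2} sees). It is resolved by the fact that the two matrices are orthogonally similar when \(U\) is square, which is why the dimension must be exactly \(n\). The dimension can always be padded or truncated to \(n\), since the representation vectors span at most \(n\) dimensions.
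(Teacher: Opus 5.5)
Your argument is correct and is essentially the paper's: both rest on \(U^\T U\) and \(UU^\T\) having the same spectrum. The paper passes through the spectral function \(\Lambda_k\) via Ky Fan's characterization, while you make the step explicit by conjugating \(M\) with an orthogonal \(P\) satisfying \(UU^\T = PYP^\T\), which is legitimate because \(U\) is square under the \(n\)-dimensional hypothesis. One small slip: in your first computation the middle term should read \(\trace(U^\T M' U) = \ip{M'}{UU^\T}\), not \(\trace(UM'U^\T)\), and its equality with \(\trace(M'D)\) holds because \(UU^\T = D\).
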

\begin{proof}
  Let \((\hat{M},\hat{Y})\) be a feasible solution
  for~\cref{def:varphi2}.
  Write \(\hat{Y} = U^\T U\) for some matrix~\(U\), and set \(u_i\) to
  be \(i\)th column of~\(U\) for each \(i \in V\).
  Thus, \(u\) is an orthonormal representation of~\(\overline{G}\).
  Let \(\bar{M}\) be an optimal solution for \(\Lambda_k(U U^\T)\)
  in~\cref{eq:kyfan}.
  Since \(U U^\T\) and \(U^\T U\) have the same nonzero eigenvalues,
  \[
  \ip{\hat{M}}{\hat{Y}}
  \leq
  \Lambda_k(U^\T U)
  =
  \Lambda_k(U U^\T)
  =
  \ip{\bar{M}}{U U^\T}
  =
  \trace(U^\T \bar{M} U)
  =
  \sum_{i\in V} \qform{\bar{M}}{u_i}.
  \]
  This proves `\(\leq\)`.
  The proof of the reverse inequality is symmetric.
\end{proof}
Note that if \(k\) is a positive integer, the theorem above
immediately implies that
\[
  \varphi_k(G) = \max \left\{ \, \sum_{i \in V} u_i^\T P u_i : \begin{array}{l}
u \text{ is an \(n\)-dim orthonormal representation of }\overline{G}, \\
P \text{ is a rank-\(k\) orthogonal projector.}
\end{array} \right\}.\]

A key parameter in the study of orthonormal representations of graphs
is the minimum dimension in which such a representation exists.
Let \(\xi(G)\) denote such parameter.
It is known~\cite{Lovasz79a} that
\[
  \vartheta(G) \leq \xi(G) \leq \chi(\overline G).
\]

\begin{theorem}
  \label{thm:varphi-orth-rank}
  Let \(G\) be a graph on \(n\) vertices. Then
  \begin{equation}
    \label{eq:3}
    \xi(\overline G) = \min\{k \in \Integers_+ : \varphi_k(G) = n\}.
  \end{equation}
\end{theorem}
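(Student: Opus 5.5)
We will show two facts: \(\varphi_k(G)=n\) for \(k=\xi(\overline G)\), and \(\varphi_k(G)<n\) for every positive integer \(k<\xi(\overline G)\). Both rest on the orthonormal-representation formulation of \cref{thm:varphi-orth-rep}, in its integer form with rank-\(k\) orthogonal projectors \(P\). The basic observation is that for unit vectors \(u_i\) and an orthogonal projector \(P\) we have \(u_i^\T P u_i = \|Pu_i\|^2 \leq 1\). Hence \(\varphi_k(G)\leq n\) always, and equality requires \(\|Pu_i\|=1\), i.e.\ \(u_i \in \operatorname{range}(P)\), for every \(i\in V\).

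For the upper bound \(\min\{k : \varphi_k(G)=n\}\leq \xi(\overline G)\), set \(d\coloneqq\xi(\overline G)\). Take an orthonormal representation \(v\colon V\to\Reals^d\) of \(\overline G\). Note that \(d\leq\chi(G)\leq n\), so we can embed \(\Reals^d\) into \(\Reals^n\) as the first \(d\) coordinates. This gives an \(n\)-dimensional orthonormal representation \(u\) of \(\overline G\) whose vectors all lie in a \(d\)-dimensional coordinate subspace. Let \(P\) be the orthogonal projector onto that subspace, so \(P\) has rank \(d\). Then \(\sum_i u_i^\T P u_i = \sum_i \|u_i\|^2 = n\), and so \(\varphi_d(G)\geq n\). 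Combined with \(\varphi_d(G)\leq n\), this gives \(\varphi_d(G)=n\).

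For the reverse inequality, suppose \(\varphi_k(G)=n\) for some positive integer \(k\). Since the feasible set in \cref{thm:varphi-orth-rep} is compact (unit vectors, and \(M\) in a bounded set), the maximum is attained by some pair \((u,P)\) with \(P\) a rank-\(k\) projector; alternatively we can use the \(M\) formulation and an extreme point. By the observation above, every \(u_i\) lies in \(\operatorname{range}(P)\), a \(k\)-dimensional space. Choosing an orthonormal basis of this range converts \(u\) into an orthonormal representation of \(\overline G\) in \(\Reals^k\), because inner products and norms are preserved. Hence \(\xi(\overline G)\leq k\).

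The only delicate point is attainment of the maximum, and it is easy to handle. With the \(M\) version of \cref{thm:varphi-orth-rep}, equality \(\sum_i u_i^\T M u_i = n\) together with \(M\preceq I\) forces \(u_i^\T(I-M)u_i=0\), so \(u_i\in\ker(I-M)\). That kernel has dimension at most \(\trace M = k\), since \(M\succeq 0\) and the eigenvalues of \(M\) equal to \(1\) contribute at most \(k\) to the trace. This argument also shows the integer restriction is harmless.
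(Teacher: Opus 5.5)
Your proof is correct, and it reaches the paper's conclusion by a different mechanism. The shared conclusion is that \(\varphi_k(G)=n\) exactly when \(\overline G\) has an orthonormal representation inside a \(k\)-dimensional space.

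The paper stays in the matrix formulation~\eqref{def:varphi}. It uses that the rank of a feasible \(Y\) equals the dimension spanned by the corresponding orthonormal representation of \(\overline G\), citing a Gram-matrix lemma. It then notes that for a positive semidefinite \(Y\) of rank \(d\) and trace \(n\), the Ky Fan sum \(\sum_{i=1}^k\lambda_i(Y)\) equals \(n\) if and only if \(k\geq d\).

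You work instead on the \(M\)-side of Theorem~\ref{thm:varphi-orth-rep}. The bound \(u_i^{\top}Mu_i\leq 1\) gives \(\varphi_k(G)\leq n\). In the equality case every \(u_i\) is forced into \(\ker(I-M)\), whose dimension is at most the trace of \(M\), namely \(k\).

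The paper's version is shorter and needs only the defining SDP plus an elementary spectral fact. Yours relies on Theorem~\ref{thm:varphi-orth-rep} but has three advantages:
\begin{itemize}
\item it avoids the rank lemma;
\item it makes explicit the attainment of the maximum (by compactness), which the paper uses tacitly when it passes from \(\varphi_k(G)=n\) to a feasible \(Y\) with \(\sum_{i\le k}\lambda_i(Y)=n\);
\item since \(\dim\ker(I-M)\leq\lfloor k\rfloor\), it gives the real-\(k\) statement mentioned after Corollary~\ref{cor:1} with no extra work.
\end{itemize}
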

\begin{proof}
  We use the fact that the rank-\(d\) feasible matrices
  in~\cref{def:varphi} are precisely the Gram matrices of orthonormal
  representations of~\(\overline{G}\) that span \(d\) dimensions; see, e.g.,
  \cite[Lemma 2.2]{LaurentP96a}.
  Hence, \(\xi(\overline G)\) is the minimum rank in this set.

  Let \(Y\) be any such matrix, and write it as \(Y = U^\T U\) for
  some \([d] \times V\) matrix~\(U\), so that the columns of~\(U\)
  form an orthonormal representation of~\(\overline G\).
  Since \(\trace U^\T U = n\), for any \(k \geq 0\) we have
  \[
    \sum_{i = 1}^k \lambda_i(U^\T U) = n \iff k \geq d.\qedhere
  \]
\end{proof}

\begin{corollary}
  \label{cor:1}
  For every graph \(G\),
  \begin{equation}
    \label{eq:5}
    \mu(G) \leq \xi(\overline G).
  \end{equation}
\end{corollary}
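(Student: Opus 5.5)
The plan is to chain \cref{thm:varphi-orth-rank} with \cref{thm:1}, using the definition \cref{eq:4} of \(\mu\) as a minimum over real \(k \in [1,n]\). Set \(d \coloneqq \xi(\overline G)\). By \cref{thm:varphi-orth-rank}, \(\varphi_d(G) = n\). Here \(d\) is a positive integer with \(1 \leq d \leq n\), since the standard basis of \(\Reals^V\) is an orthonormal representation of~\(\overline G\) of dimension~\(n\). So \(d\) lies in the range allowed by \cref{eq:4}.

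Next I apply the second inequality of \cref{thm:1} with \(k = d\), which gives \(n = \varphi_d(G) \leq \vartheta_d(G)\). I also need the reverse bound \(\vartheta_d(G) \leq n\). It holds because any feasible \(X\) in \cref{eq:thkdef} satisfies \(X \preceq I\), so \(\ip{J}{X} \leq \ip{J}{I} = n\); this is the same observation used in the proof of \cref{thm:2}. Hence \(\vartheta_d(G) = n\). Then \(d\) belongs to the set minimized in \cref{eq:4}, and so \(\mu(G) \leq d = \xi(\overline G)\).

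I do not expect a real obstacle. The only points needing care are the bookkeeping ones: that \(\xi(\overline G) \leq n\), so that \(d\) lies in \([1,n]\), and the upper bound \(\vartheta_k \leq n\) that turns \(\varphi_d(G) = n\) into equality for \(\vartheta_d\).
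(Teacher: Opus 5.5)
Your proposal is correct and follows the paper's proof. Both take \(d=\xi(\overline G)\), use Theorem~\ref{thm:varphi-orth-rank} to get \(\varphi_d(G)=n\), and then use \(\varphi_d(G)\leq\vartheta_d(G)\) from Theorem~\ref{thm:1} to conclude \(\vartheta_d(G)=n\). Your extra bookkeeping (the bound \(\vartheta_d(G)\leq n\) and the check \(d\in[1,n]\)) just makes explicit what the paper leaves implicit.
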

\begin{proof}
  Let \(G\) have \(n\) vertices.
  If \(k \in \Integers_+\) is such that \(\varphi_k(G)=n\), then
  \(\vartheta_k(G)=n\) follows from \(\varphi_k(G)\leq\vartheta_k(G)\)
  in Theorem~\ref{thm:1}.
  Thus, every feasible \(k\) in the RHS of~\cref{eq:3} is feasible
  in~\cref{eq:mu-def}.
  The result now follows from Theorem~\ref{thm:varphi-orth-rank}.
\end{proof}

In the RHS of~\cref{eq:3}, one might consider allowing non-integer
\(k\) to be feasible, analogously to what we do in~\cref{eq:4}
for~\(\vartheta_k\).
However, doing so does not change the minimum value
(see~\cite[Theorem~5.12]{Oliveira24a}).

\section{\texorpdfstring{\(\vartheta_k(G)\)}{ϑₖ(G)} does not distinguish fractional chromatic number}
\label{sec:alphak-prime}

We recall the definition of the fractional chromatic number
\(\chi_f(G)\) of a graph \(G = (V,E)\).
Let \(\cS(G)\) be the set of nonempty cocliques of \(G\).
Then \(\chi_f(G)\) is the optimal value of the following linear
program (LP):
\begin{equation}
  \label{eq:7}
  \chi_f(G) \coloneqq \min\left\{\sum_{S\in\cS(G)}y_S :
    y\in\Reals_+^{\cS(G)},\ \sum_{S\in\cS(G)}y_S\ones_S = \ones\right\}.
\end{equation}
Here, \(\ones_S \in \set{0,1}^V\) denotes the incidence vector of
\(S \subseteq V\).
It is well known that \(\chi_f(G) \leq \chi(G)\).

For each real \(k \geq 1\), let \(\alpha_k'(G)\) be the largest number
of vertices in an induced subgraph of~\(G\) whose fractional chromatic
number is \(\leq k\).
Note that \(\alpha_k'(G) \geq \alpha_k(G)\).
The motivating question of this section is whether \(\vartheta_k(G)\)
can distinguish \(k\)-colorability from having fractional chromatic
number at most \(k\), that is, whether there exists a graph \(G\) such
that \(\alpha_k'(G) > \vartheta_k(G)\).
The following result shows that this is not the case.

\begin{theorem}
  For every graph \(G\) and every real \(k \geq 1\),
  \[
    \alpha_k'(G) \leq \vartheta_k(G).
  \]
\end{theorem}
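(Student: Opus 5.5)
Let \(W\subseteq V\) induce a subgraph \(H \coloneqq G[W]\) with \(\chi_f(H)\leq k\) and \(\card{W}=\alpha_k'(G)\). The plan is to build a matrix \(X\) feasible in~\cref{eq:thkdef} with \(\ip{J}{X}=\card{W}\), supported on \(W\times W\). If \(\card{W}\leq k\) (e.g.\ when \(\chi_f(H)\) is large relative to \(\card{W}\)) we will still need trace exactly \(k\), but since the optimal value of~\cref{eq:thkdef} is unchanged when \(\ip{I}{X}=k\) is relaxed to \(\ip{I}{X}\leq k\), it suffices to produce \(X\) with \(\ip{I}{X}\leq k\).

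Take an optimal fractional coloring \(y\in\Reals_+^{\cS(H)}\) of \(H\) from~\cref{eq:7}, so \(\sum_S y_S\ones_S=\ones_W\) and \(t\coloneqq\sum_S y_S=\chi_f(H)\leq k\). Define
\[
  X \coloneqq \sum_{S\in\cS(H)} \frac{y_S}{\card{S}}\,\ones_S\ones_S^\T .
\]
Each term is PSD, so \(X\succeq 0\). Each \(S\) is a coclique of \(H\), hence of \(G\), so \(\ones_S\ones_S^\T\) vanishes on edges and \(A_G\schur X=0\). The trace is \(\sum_S y_S = t\leq k\). The objective is \(\ip{J}{X}=\sum_S y_S\card{S}\), which equals \(\ones^\T\sum_S y_S\ones_S=\card{W}\). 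So the only remaining constraint is \(X\preceq I\).

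This is the main step. Note \(\frac{1}{\card{S}}\ones_S\ones_S^\T\) is the orthogonal projector \(P_S\) onto \(\operatorname{span}\{\ones_S\}\), so \(X=\sum_S y_S P_S\). For any \(v\in\Reals^V\), \(v^\T P_S v = (\ones_S^\T v)^2/\card{S}\leq \sum_{i\in S}v_i^2\) by Cauchy--Schwarz. Hence
\[
  v^\T X v \leq \sum_S y_S \sum_{i\in S} v_i^2 = \sum_{i\in W} v_i^2\Big(\sum_{S\ni i} y_S\Big) = \sum_{i\in W} v_i^2 \leq \pnorm{v}^2,
\]
using the covering equality \(\sum_{S\ni i}y_S=1\). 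Thus \(X\preceq I\), and \(X\) is feasible for the relaxed-trace version of~\cref{eq:thkdef}. This gives \(\vartheta_k(G)\geq\card{W}=\alpha_k'(G)\).

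The obstacle I anticipated was \(X\preceq I\), and it is resolved by normalizing each coclique block by \(1/\card{S}\) together with the exact cover equality. The only other subtlety is the trace being \(\leq k\) rather than \(=k\), handled by the monotonicity remark stated after~\cref{eq:thkdef}. One could alternatively pad \(X\) with a diagonal matrix on \(V\setminus W\), but that is not needed.
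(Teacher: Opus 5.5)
Your proof is correct and uses the same matrix as the paper, $X=\sum_S y_S\,\ones_S\ones_S^\T/|S|$, built from an optimal fractional colouring of $H$. The checks of $X\succeq 0$, $A_G\circ X=0$, the trace and the objective value are also the same.

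The one genuine difference is how you prove $X\preceq I$. The paper notes that $X$ is entrywise nonnegative with $X\ones_{V(H)}=\ones_{V(H)}$, and invokes Perron--Frobenius to conclude that its largest eigenvalue is $1$. You instead bound each rank-one projector by Cauchy--Schwarz, $v^\T P_S v\le\sum_{i\in S}v_i^2$, and sum using the covering identity $\sum_{S\ni i}y_S=1$. This needs no spectral theory of nonnegative matrices. It also gives the slightly sharper statement $v^\T Xv\le\sum_{i\in V(H)}v_i^2$. The paper's route is shorter once nonnegativity is noticed, but relies on the cited theorem.

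Your handling of the trace, by relaxing to $\ip{I}{X}\le k$, is equivalent to the paper's step. The paper observes that $X$ is feasible for $\vartheta_{\chi_f(H)}(G)$ with trace exactly $\chi_f(H)$, then uses monotonicity of $\vartheta_k$ in $k$, and that monotonicity itself comes from the same relaxation remark.
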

\begin{proof}
  Let \(H\) be an induced subgraph of \(G\) with
  \(|V(H)| = \alpha_k'(G)\) and \(\chi_f(H) \leq k\), and let
  \(y \in \Reals_+^{\cS(H)}\) be an optimal solution for the
  LP~\cref{eq:7} defining \(\chi_f(H)\).
  Define \(\hat{X} \in \Sym{V}\) by
  \[
    \hat{X} \coloneqq \sum_{S \in \cS(H)} y_S \frac{\ones_S^{} \ones_S^\T}{|S|}.
  \]
  It follows immediately that \(\hat{X} \succeq 0\),
  \(A_G \circ \hat{X} = 0\), and
  \(\iprod{I}{\hat{X}} = \sum_{S \in \cS(H)} y_S = \chi_f(H) \leq k\).
  From the LP formulation,
  \[ \hat{X} ~ \ones_{V(H)} = \sum_{S \in \cS(H)} y_S \ones_S =
    \ones_{V(H)},\] and because \(\hat{X}\) is a nonnegative matrix, the
  Perron-Frobenius theorem (see for instance
  \cite[Section~2.2]{BrouwerH12a}) implies that the largest
  eigenvalue of \(\hat{X}\) is \(1\), hence \(\hat{X} \preccurlyeq I\).
  Finally,
  \[
    \ip{J}{\hat{X}} = \sum_{S \in \cS(H)} y_S |S| =
    (\ones_{V(H)})^\top \left(\sum_{S \in \cS(H)} y_S \ones_S\right) =
    |V(H)| = \alpha_k'(G).
  \]
  Hence if we let \(\ell \coloneqq \chi_f(H)\), then \(\hat{X}\) is
  feasible for the SDP \eqref{eq:thkdef} defining
  \(\vartheta_\ell(G)\) with objective value \(\alpha_k'(G)\).
  Combining with monotonicity of \(\vartheta_k(G)\) with respect
  to~\(k\), we obtain
  \(\alpha_k'(G) \leq \vartheta_\ell(G) \leq \vartheta_k(G)\) since
  \(\ell \leq k\).
\end{proof}

\section{The Weighted \texorpdfstring{\(\vartheta_k(G,\cdot)\)}{ϑₖ(G,·)} Function}
\label{sec:weights}

The Lovász theta graph parameter was extended to a real-valued
function over vectors of nonnegative vertex weights in
\cite{GroetschelLS86a}.
This function was shown to be a positive definite monotone gauge, and
this led to a rich theory connecting polyhedral combinatorics and
convex optimization (see \cite{BenedettoCC21a} for a self-contained
review).
\Nameandcite{Alizadeh95a} introduced a vertex-weighted version of
\(\vartheta_k\).
In this section we briefly review known results and basic properties,
connect them to a newly introduced vertex weighted version of
\(\varphi_k\), show a connection between these and \(\chi_f\), but
more notably we show that \(w \mapsto \vartheta_k(G,w)\) is not a
gauge.
While a negative result is perhaps not exciting, we expect it to
clarify the geometric limitations of \(\vartheta_k(G,\cdot)\) as a
relaxation, and to guide the development of alternative formulations
that retain desirable convex–analytic properties, such as gauge or
norm structures.

Throughout this section, \(w \in \Reals_+^V\) denotes a nonnegative
vector of vertex weights.
Define \(\sqrt{w} \in \Reals_+^V\) as the vector obtained by taking the
entry-wise square root, and
\begin{equation*}
  W \coloneqq \sqrt{w}\sqrt{w}^{\T}.
\end{equation*}
We refer to the Master's dissertation~\cite{Oliveira24a} of the
third author for a detailed treatment of the results in this section.

\subsection{A sandwich theorem and connection to \texorpdfstring{\(\chi_f\)}{χ\_f}}

\Nameandcite{Alizadeh95a} defined
\[
  \vartheta_k(G,w) \coloneqq
  \max\setmap[\big]{\ip{W}{X}}{
    \ip{I}{X} = k,\, A_G \schur X = 0,\, 0 \preceq X \preceq I
  }.
\]
Here we introduce three definitions in order to obtain a
vertex-weighted sandwich theorem generalizing
\(\alpha_k(G) \leq \varphi_k(G) \leq \vartheta_k(G) \leq \chi_k(\overline G)\).

Define
\begin{equation}
  \label{def:varphiw}
  \varphi_k(G,w)
  \coloneqq
  \max \left\{ \, \ip{M}{Y} : M,Y \in \Psd{V}, \, \diag Y = w,\,
    A_G \circ Y = 0, \, \ip{I}{M} = k,\,
    M \preccurlyeq I \right\}.
\end{equation}
On the combinatorial side, for each integer \(k\), set
\[
  \alpha_k(G,w) \coloneq
  \max\left\{\sum_{i\in U}w_i : U\subseteq V(G),\ G[U]\ \text{is \(k\)-colorable}\right\}.
\]
Recall that \(\cS(G)\) denotes the set of nonempty cocliques of \(G\).
Define, for each \(w \in \Integers_+^V\),
\[
  \chi_k(G,w) \coloneqq\min\left\{\sum_{S\in\cS(G)}y_S :
    y\in\Integers_+^{\cS(G)},\ \sum_{S\in\cS(G)}y_S\ones_S = k\,w\right\}.
\]

The best argument for why these are the correct generalizations is the
theorem below and its proof, which is a natural generalization of
standard arguments both in the case \(k=1\) for general weights, and
in the case \(k > 1\) for \(w = \ones\).
Because the proof does not bring any new relevant insight and because
it is slightly technical in dealing with corner cases (vertices for
which \(w_i = 0\), for instance), we prefer to omit it here, and refer
the reader to \cite[Theorem 5.2]{Oliveira24a}.

\begin{theorem}
  \label{thm:weighted-sandwich}
  Let \(G = (V,E)\) be a graph and let \(k\) be a nonnegative integer.
  Then for every \(w \in \Reals_+^V\),
  \[
    \alpha_k(G,w) \leq \varphi_k(G,w) \leq \vartheta_k(G,w).
  \]
  The second inequality also holds for nonnegative real \(k\).
  If \(w \in \Integers_+^V\) and \(k \in \Integers_+\), then
  \[
    \vartheta_k(G,w) \leq \chi_k(\overline G,w).
  \]
\end{theorem}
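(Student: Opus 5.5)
We prove the three inequalities separately, following the unweighted proofs of Theorem~\ref{thm:1} and the classical \(k=1\) weighted arguments.

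\emph{First inequality.} Let \(U\subseteq V\) attain \(\alpha_k(G,w)\), and let \(C_1,\dots,C_k\) be color classes of \(G[U]\); some may be empty. We take
\[
Y\coloneqq\sum_{j=1}^k \sqrt{w}_{C_j}\sqrt{w}_{C_j}^\T+\mathrm{Diag}(w_{V\setminus U}),
\]
where \(\sqrt{w}_{C}\) denotes \(\sqrt{w}\) restricted to \(C\) and zero elsewhere. Then \(Y\succeq0\) and \(\diag Y=w\). Moreover \(A_G\circ Y=0\), because each \(C_j\) is a coclique. The vectors \(\sqrt{w}_{C_j}\) are orthogonal, so they are eigenvectors of \(Y\) with eigenvalues \(w(C_j)\). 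If some \(\sqrt{w}_{C_j}\) vanishes, we replace it by any unit vector orthogonal to the others. We then let \(M\) be the orthogonal projector onto the span of these \(k\) vectors. This gives \(\ip{M}{Y}\geq\sum_j w(C_j)=\alpha_k(G,w)\), since the diagonal block contributes nonnegatively to \(\ip{M}{Y}\).

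\emph{Second inequality.} Let \((\hat M,\hat Y)\) be optimal in~\eqref{def:varphiw}. We scale \(\hat Y\) on the support of \(w\): define \(\tilde Y_{ij}\coloneqq \hat Y_{ij}/\sqrt{w_iw_j}\) when \(w_i,w_j>0\), and \(\tilde Y\) equal to the identity on the zero-weight coordinates. Because \(\diag\hat Y=w\) and \(\hat Y\succeq0\), the entries of \(\hat Y\) in any row \(i\) with \(w_i=0\) vanish. Hence \(\hat Y=W\circ\tilde Y\), with \(\tilde Y\succeq0\), \(\diag\tilde Y=\ones\), and \(A_G\circ\tilde Y=0\). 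Exactly as in Theorem~\ref{thm:1}, the matrix \(\hat X\coloneqq\tilde Y\circ\hat M\) is feasible for \(\vartheta_k(G,w)\). Its objective value is
\[
\ip{W}{\tilde Y\circ\hat M}=\ip{W\circ\tilde Y}{\hat M}=\ip{\hat Y}{\hat M}=\varphi_k(G,w).
\]
Nothing in this argument uses integrality of \(k\), so this inequality holds for every nonnegative real \(k\).

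\emph{Third inequality.} Take an optimal integral \(y\) for \(\chi_k(\overline G,w)\). Each \(S\in\cS(\overline G)\) is a clique of \(G\), and \(\sum_S y_S\ones_S=kw\). For an optimal \(X\) of \(\vartheta_k(G,w)\), we first exploit the clique structure. Since \(A_G\circ X=0\), the principal submatrix \(X[S]\) is diagonal, so \(\sqrt{w}_S^\T X\sqrt{w}_S=\sum_{i\in S}w_iX_{ii}\). The plan is to bound \(\ip{W}{X}=\sqrt{w}^\T X\sqrt{w}\) by \(\sum_S y_S\) through a dual certificate. We use the dual of the weighted problem, which has the same form as the dual of~\eqref{eq:thkdef} with \(J\) replaced by \(W\):
\[
\min\{k\nu+\ip{I}{Y}:\nu I+Y+Z\succeq W,\ Y\succeq0,\ Z\circ A_G=Z\}.
\]
We build the certificate from \(y\), mimicking the unweighted \(k\)-fold argument. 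With \(t\coloneqq\sum_S y_S\), we set \(\nu\) and \(Y\) from the diagonal contributions and \(Z\) supported on the edges of \(G\), aiming for the matrix
\[
\tfrac1k\sum_S y_S\bigl(\mathrm{Diag}\text{-part}\bigr)\dots,
\]
that is, a weighted analogue of the certificate \(\tfrac{1}{k}\sum_S y_S(\ldots)\) used for \(w=\ones\).

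A cleaner primal route is to reduce to \(w=\ones\). We replace each vertex \(i\) by a clique of \(w_i\) copies, forming the graph \(G_w\). Then \(\chi_k(\overline G,w)=\chi_k(\overline{G_w})\). We would then show \(\vartheta_k(G,w)\leq\vartheta_k(G_w)\) by lifting \(X\) to \(X'\coloneqq P X P^\T\) with a suitable scaling \(P\), where \(P\) maps vertex \(i\) to \(\tfrac{1}{\sqrt{w_i}}\) times the all-ones vector on its copies. This lift preserves the trace, the bound \(\preceq I\), and the objective. It also respects the orthogonality on copies: copies of the same vertex are adjacent in \(G_w\), which forces \(X'\) to be diagonal on each block. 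We then conclude with the unweighted bound \(\vartheta_k\leq\chi_k(\overline{\cdot})\) of~\eqref{eq:k-sandwich}.

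I expect this lifting step to be the main obstacle. The difficulty is that the constraint on a block of copies of vertex \(i\) demands a diagonal block, whereas \(PXP^\T\) has a rank-one block there. To handle this, I would spread \(X_{ii}\) uniformly over the copies, placing \(\tfrac{X_{ii}}{w_i}I\) on the diagonal block, and use the off-diagonal entries \(X_{ij}/\sqrt{w_iw_j}\) between blocks. I would then have to verify positive semidefiniteness and \(\preceq I\). Zero-weight vertices need separate care: they are simply deleted in \(G_w\), and since they carry no weight they do not affect the objective.
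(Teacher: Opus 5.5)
Your proofs of \(\alpha_k(G,w)\le\varphi_k(G,w)\) and \(\varphi_k(G,w)\le\vartheta_k(G,w)\) are correct. The second is the Schur-product argument of Theorem~\ref{thm:1} after writing \(\hat Y=W\circ\tilde Y\).

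The third inequality, however, is not proved. The dual-certificate paragraph never specifies \(\nu\), \(Y\), \(Z\). The blow-up you fall back on is the wrong one: if the \(w_i\) copies of \(i\) form a \emph{clique} in \(G_w\), both facts you need are false. Take \(G=K_1\), \(w=2\), \(k=1\). Then \(G_w=K_2\), so \(\vartheta_1(G,w)=2>1=\vartheta(K_2)\). Also \(\chi_1(\overline G,w)=2>1=\chi(\overline{K_2})\): in \(\overline{G_w}\) the copies are independent and may share colours, so \(\chi_k(\overline{G_w})\) ignores \(w\). The obstacle you flagged (a rank-one block where adjacency forces a diagonal one) is a symptom of this. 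Replacing the block by \(\tfrac{X_{ii}}{w_i}I\) cannot fix it: it drops the mass \((w_i-1)X_{ii}\) from the objective, and no modification can establish an inequality that is false for this \(G_w\).

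The copies must be pairwise \emph{nonadjacent} in \(G_w\), hence a clique in \(\overline{G_w}\). Then vertex \(i\) needs \(kw_i\) distinct colours and \(\chi_k(\overline{G_w})=\chi_k(\overline G,w)\). With that blow-up your lift \(PXP^\T\) works without any obstacle:
\begin{itemize}
\item \(PP^\T\) is block diagonal with blocks \(\tfrac1{w_i}J\), so \(PXP^\T\preceq PP^\T\preceq I\);
\item the trace is at most \(k\), which suffices;
\item edges of \(G_w\) only join copies of \(G\)-adjacent vertices, so there are no edges inside a block;
\item \(\ones^\T PXP^\T\ones=\sqrt w^\T X\sqrt w\).
\end{itemize}
Simpler still, you can skip the reduction entirely. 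First, \(\vartheta_k(G,w)\le k\,\vartheta(G,w)\), since \(X/k\) is feasible for \(\vartheta(G,w)\). Second, \(\vartheta(G,w)\le\chi_f(\overline G,w)\) is the classical weighted sandwich. Third, \(k\,\chi_f(\overline G,w)\le\chi_k(\overline G,w)\), because \(y/k\) is a fractional cover of \(w\).

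Your observation that \(X[S]\) is diagonal also gives a direct proof once you rescale by \(w^{-1/2}\) rather than \(\sqrt w\). Set \(a_S\coloneqq\ones_S\circ w^{-1/2}\), zero off the support of \(w\). Then \(\sum_S y_Sa_S=k\sqrt w\) and \(a_S^\T Xa_S=\sum_{i\in S}X_{ii}/w_i\). Convexity of \(x\mapsto x^\T Xx\) yields \(k^2\,\sqrt w^\T X\sqrt w\le\bigl(\sum_S y_S\bigr)\sum_S y_S\,a_S^\T Xa_S\le\bigl(\sum_S y_S\bigr)k\trace X=k^2\sum_S y_S\).
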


We introduce a vertex-weighted version of the parameter \(\mu\)
studied earlier:
\begin{equation}
  \label{eq:muw}
  \mu(G,w) \coloneqq \min\{k \in \Reals_+ : \vartheta_k(G,w)=\ip{w}{\ones}\}.
\end{equation}
In \cite[Theorem 4]{CarliCG21a} (see also
\cite[Proposition~4.7]{Oliveira24a}), the following result is
proved.
\begin{theorem}
  Let \(G=(V,E)\) be a graph. For all \(w \in \Reals_+^V\),
  \[
    \mu(G,w) \leq \chi_f(G).
  \]
\end{theorem}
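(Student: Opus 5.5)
We have to prove \(\mu(G,w)\le \chi_f(G)\). It suffices to show that for \(k\coloneqq\chi_f(G)\) we have \(\vartheta_k(G,w)=\ip{w}{\ones}\). By the same reasoning as in the unweighted case, \(\vartheta_k(G,w)\) is nondecreasing in \(k\), so the minimum in \cref{eq:muw} is then at most \(\chi_f(G)\).

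First we record the upper bound \(\vartheta_k(G,w)\le \ip{w}{\ones}\) for every \(k\). If \(X\) is feasible, then \(X\preceq I\), and since \(W\succeq 0\) we get \(\ip{W}{X}\le\ip{W}{I}=\trace W=\ip{w}{\ones}\). So it remains to exhibit a feasible \(X\) with \(\ip{W}{X}=\ip{w}{\ones}\) and \(\ip{I}{X}\le k\). The relaxation of the trace constraint to an inequality is harmless by the monotonicity noted above.

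For the construction we imitate the proof that \(\alpha_k'(G)\le\vartheta_k(G)\), now with \(H=G\) and with the weights inserted. Let \(y\) be an optimal solution of the LP \cref{eq:7} for \(\chi_f(G)\), so \(\sum_S y_S\ones_S=\ones\) and \(\sum_S y_S=\chi_f(G)=k\). The unweighted candidate \(\sum_S y_S\ones_S\ones_S^\T/|S|\) has objective \(\sum_S y_S(\sum_{i\in S}\sqrt{w_i})^2/|S|\), which is too small in general. Instead we align each block with \(\sqrt w\). For each coclique \(S\) with \(w(S)\coloneqq\sum_{i\in S}w_i>0\), set
\[
  v_S\coloneqq \frac{\sqrt{w}\schur\ones_S}{\sqrt{w(S)}},\qquad
  \hat X\coloneqq\sum_{S} y_S\, v_Sv_S^\T ,
\]
where cocliques with \(w(S)=0\) are dropped.

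We then check feasibility. Clearly \(\hat X\succeq 0\), and \(A_G\schur\hat X=0\) because each \(S\) is a coclique. The trace is \(\sum_S y_S\norm{v_S}^2\le\sum_S y_S=k\). For \(\hat X\preceq I\), we use that \(\hat X\) is entrywise nonnegative and that \(\hat X\) has a positive eigenvector on the support. Indeed, with \(u\coloneqq\sqrt w\),
\[
  \hat X u=\sum_S y_S v_S\frac{w(S)}{\sqrt{w(S)}}=\sum_S y_S\,(u\schur\ones_S)=u\schur\sum_S y_S\ones_S=u .
\]
Restricted to \(\operatorname{supp}(w)\), which is where \(\hat X\) lives, \(u\) is a positive eigenvector with eigenvalue \(1\). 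By Perron--Frobenius, applied to each irreducible block where needed, the spectral radius is \(1\), hence \(\hat X\preceq I\).

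Finally, the objective is \(\ip{W}{\hat X}=u^\T\hat X u=u^\T u=\ip{w}{\ones}\). This gives \(\vartheta_{k}(G,w)=\ip{w}{\ones}\) for \(k=\chi_f(G)\).

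The main technical obstacle is the Perron--Frobenius step when \(\hat X\) restricted to the support is reducible, together with the zero-weight vertices. I would handle both with the elementary bound \(\lambda_{\max}(\hat X)\le\max_i(\hat X u)_i/u_i\), valid for nonnegative symmetric \(\hat X\) and \(u>0\) on the support; this is a Collatz--Wielandt-type inequality. With it, irreducibility is not needed.
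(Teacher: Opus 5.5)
Your proof is correct. The rescaled blocks \(\hat X=\sum_S y_S v_Sv_S^\T\) are feasible with trace at most \(\chi_f(G)\) and objective \(\ip{w}{\ones}\). The trace padding is harmless because \((1-t)\hat X+tI\) stays feasible and, since \(\ip{W}{X}\le\ip{W}{I}\), does not decrease the objective.

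The paper does not prove this statement itself; it only cites it. Your argument is the weighted analogue of the paper's proof of \(\alpha_k'(G)\le\vartheta_k(G)\): \(\ones_S/\sqrt{|S|}\) becomes \(v_S\), and the Perron--Frobenius step is made rigorous on the support via the Collatz--Wielandt bound. More directly, Cauchy--Schwarz gives \(x^\T\hat Xx=\sum_S y_S(\sum_{i\in S}\sqrt{w_i}x_i)^2/w(S)\le\sum_S y_S\sum_{i\in S}x_i^2=\|x\|^2\), which avoids spectral arguments altogether.
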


Since \(\mu(G) = \mu(G,\ones)\), the above result can be used to
separate the inequality~\cref{eq:5}.
Namely, by taking \(G\) to be the 5-cycle, we get
\(\mu(G) \leq \chi_f(G) < \xi(\overline G)\).
We refer the reader to \cite[Proposition 5.13]{Oliveira24a} for
further details.

\subsection{Parameter \texorpdfstring{\(\vartheta_k(G,\cdot)\)}{𝜗ₖ(G,·)} is not a gauge}

A function \(\kappa\colon \Reals_+^V \to \Reals\) is a positive definite
gauge if \(\kappa\) is nonnegative, zero-valued only at the origin,
positively homogeneous, and subadditive.
That is, \(\kappa(x) \geq 0\) for each \(x \in \Reals_+^V\), with
equality if and only if \(x = 0\),
\(\kappa(\lambda x) = \lambda\kappa(x)\) for each
\(\lambda \in \Reals_+\) and \(x \in \Reals_+^V\), and
\(\kappa(x+y) \leq \kappa(x)+\kappa(y)\) for each
\(x,y\in \Reals_+^V\).
We call \(\kappa\) monotone if \(\kappa(x) \leq \kappa(y)\) whenever
\(x \leq y\) (entry-wise).
Positive definite monotone gauges are essentially the same as norms
restricted to the nonnegative orthant, and they are a fundamental tool
in combinatorial optimization.
See \cite{BenedettoCC21a} for a self-contained review on gauges and
their connection to combinatorial optimization.

For \(k=1\), the map \(w \mapsto \vartheta_1(G,w)\) is a positive
definite monotone gauge.
Its unit convex corner, that is,
\(\setst{w\in\Reals_+^V}{\vartheta_1(G,w)\leq1}\), is the theta body
of the complement graph, and this has been extensively studied since
its introduction in \cite{GroetschelLS86a}.

For \(k>1\), this fails in general.

\begin{proposition}
  \label{prop:nonconvex}
  There are a graph \(G\), an integer \(k>1\), and weights
  \(w,z \in \Integers_+^V\) such that

  \[
    \vartheta_k(G,w+z) > \vartheta_k(G,w)+\vartheta_k(G,z).
  \]
\end{proposition}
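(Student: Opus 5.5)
The approach is to build an explicit small counterexample and certify it in two directions. Upper bounds on \(\vartheta_k(G,w)\) and \(\vartheta_k(G,z)\) will come from feasible solutions of the dual SDP. A lower bound on \(\vartheta_k(G,w+z)\) will come from an explicit primal feasible \(X\).

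The intuition for where to look is as follows. The objective \(\ip{W}{X} = \sqrt{w}^{\T} X \sqrt{w}\) is a maximum of quadratics in \(\sqrt{w}\), not in \(w\). For \(k=1\) the extra structure of the theta body rescues sublinearity, but for \(k>1\) the constraint \(X \preceq I\) interacts badly with the map \(w \mapsto \sqrt{w}\). Several easy cases are useless and can be discarded first, using the bound \(\vartheta_k(G,w) \le \ip{w}{\ones}\) (from \(X\preceq I\)):
\begin{itemize}
\item For an edgeless graph, \(\vartheta_k(G,w) = \ip{w}{\ones}\), which is linear.
\item If \(k=n\), then \(X = I\) is forced and the value is again linear.
\item For complete graphs, \(X\) is diagonal and the value is the sum of the \(k\) largest weights, which is sublinear.
\end{itemize}
So a violation needs \(w\) and \(z\) whose values fall strictly below their totals, together with cross terms \(\sqrt{w_i(w_j+z_j)}\)-type gains on non-edges when the weights are added.

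I would look among small graphs with both edges and non-edges and with \(1<k<n\). The first candidates are \(P_3\), \(C_4\), \(K_2\cup K_1\) with a pendant edge, \(C_5\) (with \(k=2\)), and the graph \(K_3\) plus a vertex adjacent to one or two of its vertices. For the weights, I would take \(w\) and \(z\) to be \(0/1\) vectors on overlapping vertex sets, each containing an edge so that \(\vartheta_k\) cannot reach the total weight, arranged so that \(w+z\) has a large coordinate on a vertex nonadjacent to many others. Large weights are what allow a rank-one block of \(X\) aligned with \(\sqrt{w+z}\) to gain.

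I would solve the three SDPs numerically, scanning such \(w,z\) (and small integer rescalings of them), until a strict violation appears. Then I would rationalize: write an explicit \(X\) feasible for \(\vartheta_k(G,w+z)\), checking \(A_G\schur X=0\), trace \(k\) and \(0\preceq X\preceq I\) via its eigenvalues. I would also write explicit dual certificates \((\nu,Y,Z)\), with \(Y\succeq0\), \(Z\schur A_G=Z\) and \(\nu I+Y+Z\succeq W\) for the weighted objective, whose values \(k\nu+\ip{I}{Y}\) bound \(\vartheta_k(G,w)\) and \(\vartheta_k(G,z)\).

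The main obstacle is the search itself: finding a small instance with a clean, rational violation, since the optimal \(X\) typically has irrational entries. To handle this, I would choose symmetric weights, so that an automorphism of \(G\) fixing \(w\) lets me restrict to invariant \(X\). That reduces each SDP to a small matrix whose eigenvalue conditions can be checked by hand.
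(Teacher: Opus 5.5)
Your proposal does not yet prove the statement. It is a plan for a numerical search (``solve the three SDPs numerically \dots\ until a strict violation appears''), and it never exhibits an instance. For an existence claim, the instance together with its verification is the whole proof, and nothing in your plan guarantees that your candidate graphs and weight patterns contain a violation. Your certification tools (explicit primal \(X\), dual certificates, symmetry reduction) are sensible, but you never apply them to anything.

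The paper takes \(G=K_3\cup K_1\), \(k=2\), \(w=\ones\), and \(z\) equal to \(3\) on the isolated vertex and \(1\) on the triangle. This graph is absent from your list: it is the triangle plus a vertex adjacent to \emph{none} of its vertices. The certificate is the matrix \(X\) with diagonal entries \(\tfrac12\), entries \(\tfrac1{2\sqrt3}\) between the isolated vertex and each triangle vertex, and zeros elsewhere. It has trace \(2\) and eigenvalues \(1,\tfrac12,\tfrac12,0\), so it is feasible.
\begin{itemize}
\item Its value for the weight \(z\) is \(6=\ip{z}{\ones}\). So \(\vartheta_2(G,z)=6\) by the trivial bound \(\vartheta_k(G,z)\le\ip{z}{\ones}\), and no dual certificate is needed.
\item Its value for \(w+z\) is \(5+2\sqrt6\).
\item The only nontrivial upper bound required is \(\vartheta_2(G,\ones)=2+\sqrt3\). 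The paper cites this, and your symmetry reduction proves it. An automorphism-invariant \(X\) has one diagonal value on the triangle, one on the isolated vertex, one off-diagonal value between them, and zeros on the triangle edges. Maximizing under trace \(2\) and \(0\preceq X\preceq I\) returns exactly this \(X\).
\end{itemize}
Since \(5+2\sqrt6>8+\sqrt3\), the proposition follows.

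It is worth seeing why this works, because some of your search heuristics point the wrong way. For an entrywise nonnegative feasible \(X\), the diagonal terms of the objective are additive in the weights. For the off-diagonal terms, Cauchy--Schwarz gives \(\sqrt{(w_i+z_i)(w_j+z_j)}\ge\sqrt{w_iw_j}+\sqrt{z_iz_j}\). So the value of \(X\) at \(w+z\) is at least the sum of its values at \(w\) and at \(z\), strictly if some \(X_{ij}>0\) with \(i\ne j\) has \(w_iz_j\ne w_jz_i\). When \(k>1\), the constraint \(X\preceq I\) can make one flat \(X\) optimal for two non-proportional weightings; the \(X\) above is optimal for both \(w\) and \(z\). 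Superadditivity is then immediate.

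Consequently, only one of \(\vartheta_k(G,w)\) and \(\vartheta_k(G,z)\) has to fall short of its total weight, not both as you require. The supports also need not overlap. On the same graph, let \(w\) be the indicator of the triangle and \(z\) the indicator of the isolated vertex.
\begin{itemize}
\item The off-diagonal triangle entries of any feasible matrix vanish, so the objective for \(w\) is the diagonal sum over the triangle. This is at most the trace, so \(\vartheta_2(G,w)\le2\).
\item Clearly \(\vartheta_2(G,z)\le1\).
\item The same \(X\) gives \(\vartheta_2(G,w+z)=\vartheta_2(G,\ones)\ge2+\sqrt3>3\).
\end{itemize}
This variant needs nothing beyond trivial bounds.
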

\begin{proof}
  Take \(G\) to be the disjoint union \(K_3 \cup K_1\),
  \(k \coloneqq 2\), and \(w\coloneqq\ones\).
  Define \(z\) as having weight 1 on the vertices of \(K_3\) and
  weight 3 on the single vertex of \(K_1\).
  From the explicit formula for \(\vartheta_k(K_n\cup K_1)\) proved in
  \cite{Oliveira24a}, we have
  \[
    \vartheta_2(G,w)=2+\sqrt{3}.
  \]
  Consider
  \[
    X\coloneqq\frac12
    \begin{bmatrix}
      1 & \frac1{\sqrt3} & \frac1{\sqrt3} & \frac1{\sqrt3}\\
      \frac1{\sqrt3} & 1 & 0 & 0\\
      \frac1{\sqrt3} & 0 & 1 & 0\\
      \frac1{\sqrt3} & 0 & 0 & 1
    \end{bmatrix}.
  \]
  This matrix is feasible for \(\vartheta_2(G,z)\) and has objective value
  \[
    \left\langle \sqrt{z}\sqrt{z}^\T, X\right\rangle = 6 = \ip{z}{\ones},
  \]
  so \(\vartheta_2(G,z)=6\).
  Now evaluate the same feasible matrix for \(w+z\):
  \[
    \vartheta_2(G,w+z) \geq \left\langle \sqrt{w+z}\sqrt{w+z}^\T, X\right\rangle
    = 5 + 2\sqrt{6}.
  \]
  Hence
  \[
    \vartheta_2(G,w+z)
    \ge 5+2\sqrt6
    > 8+\sqrt3
    = \vartheta_2(G,w)+\vartheta_2(G,z).\qedhere
  \]
\end{proof}

As a consequence, one cannot use \(\vartheta_k(G,\cdot)\) the same way
\(\vartheta(G,\cdot)\) is used to define an analogous version of a
\(k\)-fold theta body.
We discuss in the next section possible alternative approaches to
define a \(k\)-fold theta body.

\subsection{The parameter \texorpdfstring{\(\varphi_k(G,\cdot)\)}{φₖ(G,·)} is a gauge}

The parameter \(\varphi_k(G,\cdot)\) retains the gauge structure for
\(k > 1\).

\begin{theorem}
  Let \(G\) be a graph and let \(k>0\).
  Then the map \(w \mapsto \varphi_k(G,w)\) defined in
  \eqref{def:varphiw} is a positive definite monotone gauge.
\end{theorem}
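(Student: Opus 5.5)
The plan is to check the four properties (nonnegativity with definiteness, positive homogeneity, subadditivity, monotonicity) directly from \eqref{def:varphiw}. The whole argument rests on one reformulation. The feasible set of \(Y\)'s, \(\mathcal{Y}(w)\coloneqq\setst{Y\in\Psd{V}}{\diag Y = w,\ A_G\circ Y=0}\), is nonempty (take \(Y=\Diag(w)\)), and by \cref{eq:kyfan}
\[
  \varphi_k(G,w)=\max\setst{\Lambda_k(Y)}{Y\in\mathcal{Y}(w)}.
\]
When \(k\) is not an integer or exceeds \(n\), \(\Lambda_k\) should be read as the SDP value in \cref{eq:kyfan}. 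In all cases \(\Lambda_k\) is a maximum of linear functionals of \(Y\), so it is convex and positively homogeneous. It is also nonnegative on \(\Psd{V}\).

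\textbf{Homogeneity and nonnegativity.} For \(\lambda>0\), scaling gives \(\mathcal{Y}(\lambda w)=\lambda\mathcal{Y}(w)\), so \(\varphi_k(G,\lambda w)=\lambda\varphi_k(G,w)\). At \(w=0\), every \(Y\in\mathcal{Y}(0)\) is positive semidefinite with zero diagonal, hence \(Y=0\), and so \(\varphi_k(G,0)=0\).

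Nonnegativity holds because \(Y\succeq0\) and \(M\succeq0\) give \(\ip{M}{Y}\geq0\).

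For definiteness, suppose \(w\neq0\) and take \(Y=\Diag(w)\) with \(M=\tfrac{k}{n}I\). This \(M\) is feasible whenever \(k\le n\), and its objective value is \(\tfrac kn\ip{w}{\ones}>0\). Larger \(k\) would make the \(M\)-constraints infeasible, so I assume throughout that \(0<k\le n\), which is the paper's implicit range.

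\textbf{Subadditivity.} This is the main step, and I expect it to be the only real point. Take optimal \(Y_1\) for \(w\) and \(Y_2\) for \(z\). The set of feasible \(Y\)'s is additive: \(Y_1+Y_2\in\Psd{V}\), its diagonal is \(w+z\), and \(A_G\circ(Y_1+Y_2)=0\). Hence
\[
  \varphi_k(G,w+z)=\max_{Y\in\mathcal{Y}(w+z)}\Lambda_k(Y)\ \ge\ \Lambda_k(Y_1+Y_2),
\]
which is the wrong direction for subadditivity. The correct direction needs the reverse comparison: every \(Y\in\mathcal{Y}(w+z)\) should satisfy \(\Lambda_k(Y)\le\varphi_k(G,w)+\varphi_k(G,z)\).

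I would therefore try a factorization argument, using \(w+z>0\) as the generic case and handling zero entries by continuity or restriction. Write \(D=\Diag(\sqrt{w+z})\), so \(Y=D\,\tilde Y D\) with \(\diag\tilde Y=\ones\) and \(\tilde Y\) feasible for \(\varphi_k(G)\). Then set \(Y_1=D_w\tilde YD_w\) and \(Y_2=D_z\tilde YD_z\), where \(D_w=\Diag(\sqrt w)\) and \(D_z=\Diag(\sqrt z)\). These lie in \(\mathcal{Y}(w)\) and \(\mathcal{Y}(z)\) respectively. The hope is that \(\Lambda_k(Y)\le\Lambda_k(Y_1)+\Lambda_k(Y_2)\).

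In the Gram picture, with \(u_i\) unit vectors, \(Y\) is the Gram matrix of \(\sqrt{w_i+z_i}\,u_i\), and \(\Lambda_k(Y)=\max_P\sum_i(w_i+z_i)\,u_i^\T Pu_i\) over rank-\(k\) projectors \(P\) (as after Theorem~\ref{thm:varphi-orth-rep}). This expression is linear in the weights for a fixed \(P\) and a fixed representation. Splitting it as \(\sum_i w_iu_i^\T Pu_i+\sum_i z_iu_i^\T Pu_i\), each piece is bounded by \(\Lambda_k\) of the Gram matrix of \(\sqrt{w_i}u_i\), respectively \(\sqrt{z_i}u_i\), by the same Ky Fan/trace identity used in Theorem~\ref{thm:varphi-orth-rep}. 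Thus
\[
  \Lambda_k(Y)\le\Lambda_k(Y_1)+\Lambda_k(Y_2)\le\varphi_k(G,w)+\varphi_k(G,z).
\]
The point to verify carefully is that the orthonormal-representation formulation indeed extends to weights as \(\varphi_k(G,w)=\max\sum_i w_i\,u_i^\T M u_i\). The key is \(\Lambda_k(U^\T U)=\Lambda_k(UU^\T)\) with \(UU^\T=\sum_i w_iu_iu_i^\T\), which is exactly the argument of Theorem~\ref{thm:varphi-orth-rep}.

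\textbf{Monotonicity.} In the weighted orthonormal-representation form the objective \(\sum_i w_i\,u_i^\T Mu_i\) has nonnegative coefficients \(u_i^\T Mu_i\ge0\), so it is nondecreasing in \(w\) for each fixed \((u,M)\). Taking the maximum preserves this, which gives monotonicity and completes the proof.
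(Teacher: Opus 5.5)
Your proof is correct and follows essentially the paper's route: both subadditivity and monotonicity rest on the weighted orthonormal-representation form \(\varphi_k(G,w)=\max\sum_{i\in V} w_i\,u_i^\T M u_i\), whose objective is linear in \(w\) with nonnegative coefficients, and this is exactly the paper's description of \(\varphi_k(G,\cdot)\) as the support function of \(\texttt{M}_k(G)\). Your subadditivity step and your definiteness witness are more concrete renderings of the same ideas: the former normalizes a feasible \(Y\) for \(w+z\) into an orthonormal representation and splits \(UU^\T=U_1U_1^\T+U_2U_2^\T\) under the sublinear \(\Lambda_k\), and the latter takes the diagonal matrix with diagonal \(w\) together with \(M=\tfrac{k}{n}I\).
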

\begin{proof}
  Let \(w,z \in \Reals_+^V\) and let \(\lambda \geq 0\).
  We will show that \(\varphi_k(G,\cdot)\) is positive definite,
  positively homogeneous, subadditive, and monotone.

  \begin{description}[leftmargin=*,itemsep=3pt]
  \item[Positive definiteness] the facts that \(\varphi_k(G,\cdot)\)
    is nonnegative and \(\varphi_k(G,0) = 0\) are immediate.
    If \(w \neq 0\), then any matrix \(Y\) with \(\diag Y = w\) has a
    positive eigenvalue.
    By choosing \(M\) to contain a positive eigenvalue in the same
    eigenspace, we can ensure that \(\ip{M}{Y} > 0\), hence
    \(\varphi_k(G,w) > 0\).

  \item[Positive homogeneity] let \((\hat{M},\hat{Y})\) be feasible
    for \(\varphi_k(G,w)\) in~\cref{def:varphiw}, attaining the optimum.
    Note that \((\hat{M},\lambda\hat{Y})\) is feasible for
    \(\varphi_k(G,\lambda w)\) with objective value
    \(\lambda \varphi_k(G,w)\).
    Thus, we have
    \(\varphi_k(G,\lambda w) \geq \lambda \varphi_k(G,w)\).
    On the other hand, if \((\hat{M},\hat{Y})\) is feasible for
    \(\varphi_k(G,\lambda w)\), with $\lambda \neq 0$, and attaining the optimum, then
    \((\hat{M},\tfrac{1}{\lambda}\hat{Y})\) is feasible for
    \(\varphi_k(G,w)\) and its objective value is
    \(\tfrac{1}{\lambda}\varphi_k(G,\lambda w)\).
    Thus \(\varphi_k(G,\lambda w) = \lambda \varphi_k(G,w)\).
  \end{description}

  To see subadditivity, we resort to an alternative formulation of
  \(\varphi_k(G,w)\).
  Let us define the set
  \[
    \texttt{M}_k(G) \coloneqq
    \left\{x \in \Reals_+^V :
      \begin{array}{l}
        \exists~M \in \Psd{V}, M \preccurlyeq I, \ip{I}{M} = k,\\
        \exists~u \colon V \to \Reals^V \text{ an orthonormal representation of } \overline G\\
        \text{with } x_i = u_i^\T M u_i
      \end{array}\right\}.
  \]
  An immediate extension of Theorem~\ref{thm:varphi-orth-rep} shows that
  \[
    \varphi_k(G,w) = \max\{\ip{w}{x} : x \in \texttt{M}_k(G)\}.
  \]
  (See \cite[Theorem 5.11]{Oliveira24a} for a detailed proof of
  this equivalence.)
  \begin{description}[leftmargin=*,itemsep=3pt]
  \item[Subadditivity] follows from
    \begin{align*}
      \varphi_k(G,w+z) & = \max\{\ip{w+z}{x} : x \in \texttt{M}_k(G)\} \\
                       & = \max\{\ip{w}{x} + \ip{z}{x} : x \in \texttt{M}_k(G)\} \\
                       & \leq \max\{\ip{w}{x} : x \in \texttt{M}_k(G)\} + \max\{\ip{z}{x} : x \in \texttt{M}_k(G)\} \\
      & = \varphi_k(G,w) + \varphi_k(G,z).
    \end{align*}
  \item[Monotonicity] if \(w \leq z\), then \(\ip{w}{x} \leq \ip{z}{x}\)
    for every \(x \in \texttt{M}_k(G)\).
    Hence
    \(\varphi_k(G,w) = \max\{\ip{w}{x} : x \in \texttt{M}_k(G)\} \leq
    \max\{\ip{z}{x} : x \in \texttt{M}_k(G)\} = \varphi_k(G,z)\).
  \end{description}
\end{proof}

While the result above offers a strong perspective on the extension of
the \(k = 1\) theory for general~\(k\), we do not know whether the set
\(\texttt{M}_k(G)\) is SDP representable for \(k > 1\).
In fact, it is not even clear whether \(\texttt{M}_k(G)\) is convex
for \(k > 1\).
This is the motivation for Section~\ref{sec:theta-body-k}.

\subsection{A Greene--Kleitman--Lovász upper bound to
  \texorpdfstring{\(\varphi_k(G,w)\)}{φₖ(G,w)}}
\label{subsec:psi-k}

\Nameandcite{Greene76a} and
Greene--Kleitman~\cite{GreeneK76a} studied a colouring parameter that
Lovász later used in his definition of \(k\)-perfect graphs.
For a graph \(G=(V,E)\), let
\[ \psi_k(G) = \min_{S\subseteq V} \{ k\chi(G-S) + |S| \}.
\]
If \(G\) is a comparability graph, then cliques are chains and
\[
  \min_{\mathcal{C}} \sum_{C \in
    \mathcal{C}}\min\set[\big]{\card{C},k}
  =
  \min_{S\subseteq V} \set[\Big]{k\chi(\overline G-S)+\card{S}},
\]
where the leftmost minimum is taken over all chain
partitions~\(\mathcal{C}\).

We proceed to define the fractional weighted version for a vector
\(w \in \Reals_+^V\) and a positive integer \(k\) by
\[
  \psi^f_k(G,w)
  \coloneqq
  \min_{S\subseteq V}
  \left\{
    k~\chi_f(G-S,w_{V\setminus S})+
    \sum_{i\in S}w_i
  \right\}.
\]
The parameter interpolates between colouring and vertex deletion: one
may delete a set \(S\), paying its full weight, and then pay \(k\)
times the (fractional) colouring cost of the remaining graph.
Lovász's class of \(k\)-perfect graphs is motivated by equality
between \(\alpha_k\) and the ordinary version of this parameter on all
induced subgraphs.

\begin{theorem}
  \label{thm:varphi-psi}
  Let \(G=(V,E)\) be a graph on \(n\) vertices, let \(w\in\Reals_+^V\), and let
  \(k\in[n]\).
  Then
  \[
    \varphi_k(G,w)\leq \psi^f_k(\overline G,w).
  \]
\end{theorem}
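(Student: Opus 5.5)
Fix an arbitrary \(S \subseteq V\). The goal is the bound
\[
  \varphi_k(G,w) \leq k\,\chi_f(\overline G - S, w_{V\setminus S}) + \sum_{i\in S} w_i ;
\]
minimizing over \(S\) then gives the theorem. Let \((\hat M,\hat Y)\) be optimal in~\cref{def:varphiw}, and write \(\hat Y = U^\T U\) with columns \(u_i\), so that \(\|u_i\|^2 = w_i\) and \(\ip{u_i}{u_j} = 0\) whenever \(ij \in E(G)\). As in Theorem~\ref{thm:varphi-orth-rep}, the objective equals \(\ip{\hat M}{\hat Y} \leq \Lambda_k(U^\T U) = \Lambda_k(UU^\T) = \Lambda_k\bigl(\sum_{i\in V} u_iu_i^\T\bigr)\). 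We then use subadditivity of the Ky Fan \(k\)-sum \(\Lambda_k\), which is a convex, positively homogeneous function, being a maximum of linear functionals in~\cref{eq:kyfan}. This splits the sum into the part indexed by \(S\) and the part indexed by \(V\setminus S\).

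The part indexed by \(S\) is easy. For PSD matrices, \(\Lambda_k \leq \trace\), so
\[
  \Lambda_k\Bigl(\sum_{i\in S} u_iu_i^\T\Bigr) \leq \sum_{i\in S}\|u_i\|^2 = \sum_{i\in S} w_i .
\]

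For the part indexed by \(V\setminus S\), take an optimal fractional weighted colouring of \(\overline G - S\). Its colour classes are cocliques of \(\overline G\), i.e.\ cliques \(C\) of \(G[V\setminus S]\). It gives weights \(y_C \geq 0\) with \(\sum_C y_C = \chi_f(\overline G - S, w_{V\setminus S})\) and \(\sum_{C \ni i} y_C \geq w_i\) for every \(i\) (or equality, with the usual convention for the weighted LP). Within each clique \(C\) the vectors \(u_i\) are pairwise orthogonal, since distinct vertices of \(C\) are adjacent in \(G\). Set \(\hat u_i = u_i/\|u_i\|\) for \(w_i>0\). Because
\[
  u_iu_i^\T = w_i\,\hat u_i\hat u_i^\T \preceq \sum_{C\ni i} y_C\,\hat u_i\hat u_i^\T
\]
and \(\Lambda_k\) is monotone with respect to \(\preceq\), we obtain
\[
  \Lambda_k\Bigl(\sum_{i\notin S}u_iu_i^\T\Bigr)
  \leq \Lambda_k\Bigl(\sum_C y_C P_C\Bigr)
  \leq \sum_C y_C\,\Lambda_k(P_C),
\]
where \(P_C \coloneqq \sum_{i\in C}\hat u_i\hat u_i^\T\) is an orthogonal projector. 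Hence \(\Lambda_k(P_C) \leq k\), and this part is at most \(k\,\chi_f(\overline G - S,w_{V\setminus S})\). Adding the two bounds proves the claim for the fixed \(S\).

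The main care point is the middle step: stating the weighted fractional chromatic number LP precisely (covering with \(\geq\) versus partition with \(=\); both give the same value since cliques are closed under taking subsets), and checking that the domination \(u_iu_i^\T \preceq \sum_{C\ni i} y_C\,\hat u_i\hat u_i^\T\) is valid. It is, since \(\hat u_i \hat u_i^\T \succeq 0\) and the coefficient \(\sum_{C\ni i} y_C\) is at least \(w_i\). Vertices with \(w_i=0\) contribute nothing and can simply be dropped. The remaining ingredients are standard properties of \(\Lambda_k\): monotonicity, subadditivity, the bound \(\Lambda_k \leq \trace\) on PSD matrices, and \(\Lambda_k \leq k\) for projectors. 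All of these follow directly from the SDP formulation~\cref{eq:kyfan}.
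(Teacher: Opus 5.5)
Your proof is correct, but it takes a different route from the paper's. The paper splits into two cases according to whether \(k>n-|S|\). It needs this split because the SDP defining \(\varphi_k(G-S,\cdot)\) is infeasible when \(k>n-|S|\). In the main case it applies Cauchy interlacing to the principal submatrix \(\widehat{Y}_{\overline{S}}\), which gives the deletion inequality \(\varphi_k(G,w)\leq\varphi_k(G-S,w_{V\setminus S})+\sum_{i\in S}w_i\). It then finishes with \(\varphi_k\leq k\varphi_1\) and the cited weighted sandwich bound \(\varphi_1(H,u)=\vartheta(H,u)\leq\chi_f(\overline{H},u)\).

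You instead pass to \(UU^{\top}=\sum_i u_iu_i^{\top}\), as in Theorem~\ref{thm:varphi-orth-rep}. You replace interlacing by subadditivity of \(\Lambda_k\) together with \(\Lambda_k\leq\trace\) on the \(S\)-part. On the rest you effectively reprove the \(k=1\) sandwich bound inline, via the observation that the normalized vectors on a clique are orthonormal, so \(P_C\) is a projector.

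The paper's version is modular and yields the deletion inequality, which is of independent interest. Yours is self-contained, avoids both the case split and the identification \(\varphi_1=\vartheta\), and works verbatim for real \(k\). It also gives slightly more for free. Since \(\Lambda_k(P_C)\leq\min\{k,|C|\}\), running your argument with \(S=\emptyset\) bounds \(\varphi_k(G,w)\) by the minimum of \(\sum_C y_C\min\{k,|C|\}\) over fractional weighted clique covers of \(G\). This is a fractional analogue of the Greene--Kleitman chain-partition expression. It is never larger than \(\psi^f_k(\overline{G},w)\): given \(S\) and a cover of \(G-S\), put each vertex of \(S\) into a singleton clique with weight \(w_i\).
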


\begin{proof}
  Fix \(S\subseteq V\).
  We split the proof in two cases.
  If \(k > n - |S|\), then
  \begin{align*}
    \varphi_k(G,w)
    &\leq w^\T \ones \\
    & \leq (n-|S|)~\max\{w_i : i \in V\setminus S\}  + w^\T \ones_S \\
    & \leq k~\chi_f(\overline G-S,w_{V\setminus S}) + w^\T \ones_S.
  \end{align*}
  Otherwise \(k \leq n-|S|\).
  Let \(\widehat{Y}\) be optimum for \eqref{def:varphiw}, and
  \(\widehat{Y}_{\overline{S}}\) its principal submatrix corresponding
  to \(V \setminus S\).
  By eigenvalue interlacing applied to a principal submatrix, one has
  \[ \trace \widehat{Y}_{\overline{S}} - \sum_{i = 1}^k
    \lambda_i(\widehat{Y}_{\overline{S}})
    \leq \trace \widehat{Y} - \sum_{i = 1}^k \lambda_i(\widehat{Y}),
  \]
  thus
  \begin{align*}
  \varphi_k(G,w) & \leq \sum_{i = 1}^k
                   \lambda_i(\widehat{Y}_{\overline{S}})
                   + (\trace \widehat{Y} - \trace \widehat{Y}_{\overline{S}}) \\
  & \leq \varphi_k(G - S,w_{V\setminus S}) + w^\T \ones_S.
  \end{align*}
  Also \(\varphi_k(H,u)\leq k\varphi_1(H,u)\) for every graph \(H\) and
  \(u \in \Reals_+^{V(H)}\).
  Since \(\varphi_1(H,u)=\vartheta(H,u)\leq\chi_f(\overline H,u)\), we get
  \[
    \varphi_k(G,w)
    \leq
    k~\chi_f(\overline G-S,w_{V\setminus S})+
    \sum_{i\in S}w_i .
  \]
  Taking the minimum over all \(S\subseteq V\) gives the result.
\end{proof}

For \(G \coloneqq \overline {K_n}\), we have
\(\psi_k^f(\overline G) = \psi_k(\overline G) = n\) and \(\chi_k(\overline G) = kn\).
Thus, \cref{thm:varphi-psi} is sharper than~\cref{eq:k-sandwich} on
some graphs.

\section{The \texorpdfstring{\(k\)}{k}-fold theta body}
\label{sec:theta-body-k}

The failure of subadditivity for \(\vartheta_k(G,\cdot)\) prevents us
from using \(\vartheta_k\) directly to define a convex theta body in
the same way that \(\vartheta(G,\cdot)\) can be used to define
\(\THbody(G)\).
We therefore introduce an explicitly convex substitute.
For an integer \(k\geq1\), our natural proposal for a \(k\)-fold theta
body is given by
\begin{equation}
  \THbody_k(G) \coloneqq
  \left\{x \in \Reals_+^V :
    \begin{array}{l}
      x = x^{(1)} + \cdots + x^{(k)}, \\
      x^{(i)} \in \THbody(G) \text{ for each } i \in[k], \\
      0 \leq x \leq \ones .
    \end{array}\right\}.
\end{equation}
This can be naturally extended to nonnegative real \(k\), recalling
that \(\THbody(G)\) is a convex set, by
\begin{equation}
  \label{def:THk}
  \THbody_k(G) \coloneqq k \THbody(G) \cap [0,1]^V.
\end{equation}

Note that this is a convex set.
We then naturally define, for each \(w \in \Reals_+^V\),
\begin{equation}
  \label{def:thkprime}
  \theta_k(G,w) \coloneqq \max\setst[\big]{\ip{w}{x}}{x \in \THbody_k(G)}
\end{equation}

The following proposition is immediate.
\begin{proposition}
  The function \(\theta_k(G,\cdot)\) defined in \eqref{def:thkprime} is
  a positive definite monotone gauge.
\end{proposition}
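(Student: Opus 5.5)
The plan is to treat \(\theta_k(G,\cdot)\) as the support function of the set \(\THbody_k(G)=k\THbody(G)\cap[0,1]^V\), restricted to \(\Reals_+^V\). Everything should then follow from standard facts about support functions, plus two properties of \(\THbody_k(G)\): it is a compact convex set, and it contains a small positive multiple of every coordinate vector.

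First, the set. Compactness and convexity of \(\THbody_k(G)\) are inherited from \(\THbody(G)\) (compact and convex) and from the cube \([0,1]^V\). Hence the maximum in \eqref{def:thkprime} is attained and finite. We also need \(0\in\THbody_k(G)\), which holds because \(0\in\THbody(G)\).

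With this in hand, the gauge axioms are routine:
\begin{itemize}
\item \emph{Nonnegativity and value at the origin.} Since \(0\in\THbody_k(G)\), we get \(\theta_k(G,w)\geq 0\), and clearly \(\theta_k(G,0)=0\).
\item \emph{Positive homogeneity.} This holds because \(\ip{\lambda w}{x}=\lambda\ip{w}{x}\) for \(\lambda\geq0\).
\item \emph{Subadditivity.} This follows by the same argument used for \(\varphi_k\) through \(\texttt{M}_k(G)\): \(\max\ip{w+z}{x}\leq\max\ip{w}{x}+\max\ip{z}{x}\).
\item \emph{Monotonicity.} Since \(\THbody_k(G)\subseteq\Reals_+^V\), every feasible \(x\) is nonnegative. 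So \(w\leq z\) gives \(\ip{w}{x}\leq\ip{z}{x}\) for every feasible \(x\), and taking maxima gives \(\theta_k(G,w)\leq\theta_k(G,z)\).
\end{itemize}

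The only step needing care is positive definiteness: we must show \(\theta_k(G,w)>0\) whenever \(w\neq0\). Here I would use that each coordinate vector \(e_i\) lies in \(\THbody(G)\), since single vertices are stable sets and \(\THbody(G)\) contains the stable set polytope. For real \(k>0\), the point \(\min\{k,1\}e_i\) then lies in both \(k\THbody(G)\) and \([0,1]^V\), so it belongs to \(\THbody_k(G)\). Choosing \(i\) with \(w_i>0\) gives
\[
  \theta_k(G,w)\geq\min\{k,1\}\,w_i>0.
\]
This step is the only point where the hypothesis \(k>0\) is used; if \(k=0\), the set collapses to \(\{0\}\) and positive definiteness fails.
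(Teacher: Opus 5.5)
Your proof is correct and is the routine support-function argument the paper has in mind when it calls the proposition immediate; the paper gives no written proof. Your remark that positive definiteness needs \(k>0\), shown via \(\min\{k,1\}e_i\in\THbody_k(G)\), is a worthwhile precision, since the paper defines \(\THbody_k(G)\) for every nonnegative real \(k\) and \(\THbody_0(G)=\{0\}\).
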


We can also show that \(\theta_k(G,w)\) is an upper bound for \(\alpha_k(G,w)\):
\begin{proposition}
  For every graph \(G = (V,E)\), every positive integer \(k\), and every
  \(w \in \Reals_+^V\),
  \[\alpha_k(G,w) \leq \theta_k(G,w).\]
\end{proposition}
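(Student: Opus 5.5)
Take an optimal set \(U\subseteq V\) for \(\alpha_k(G,w)\), so \(G[U]\) is \(k\)-colorable and \(\alpha_k(G,w)=\sum_{i\in U}w_i=\ip{w}{\ones_U}\). Since \(\theta_k(G,w)\) is the maximum of \(\ip{w}{x}\) over \(\THbody_k(G)\), it suffices to show that \(\ones_U\in\THbody_k(G)\) and then use \(x\coloneqq\ones_U\) as a feasible point.

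Fix a proper \(k\)-coloring of \(G[U]\) with color classes \(S_1,\dots,S_k\). Some classes may be empty, which we allow when \(|U|<k\). Each \(S_j\) is a coclique of \(G\), because an edge of \(G\) between two vertices of \(S_j\) would also be an edge of the induced subgraph \(G[U]\). We use the standard fact that \(\THbody(G)\) contains the incidence vector of every coclique of \(G\), including \(0\) for the empty set, since \(\STAB(G)\subseteq\THbody(G)\). To check this directly: for a coclique \(S\), the matrix \(\begin{bmatrix}1&\ones_S^\T\\ \ones_S&\ones_S\ones_S^\T\end{bmatrix}\succeq0\) vanishes on edges and has diagonal \((1,\ones_S)\). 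Hence \(x^{(j)}\coloneqq\ones_{S_j}\in\THbody(G)\) for each \(j\in[k]\). These vectors sum to \(\ones_U\), and \(0\le\ones_U\le\ones\), so \(\ones_U\) lies in the integer-\(k\) form of \(\THbody_k(G)\).

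The definition actually in force for \(\theta_k\) is \eqref{def:THk}, namely \(k\THbody(G)\cap[0,1]^V\), so we also need \(\ones_U\in k\THbody(G)\). By convexity of \(\THbody(G)\),
\[
\tfrac1k\ones_U=\tfrac1k\sum_{j=1}^k\ones_{S_j}\in\THbody(G),
\]
and hence \(\ones_U\in k\THbody(G)\). Together with \(\ones_U\in[0,1]^V\), this gives \(\ones_U\in\THbody_k(G)\), and therefore
\[
\theta_k(G,w)\ge\ip{w}{\ones_U}=\alpha_k(G,w).
\]

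The only step that needs care is the convexity argument that moves from the Minkowski-sum form of \(\THbody_k(G)\) to the scaled form \(k\THbody(G)\). The rest is the classical inclusion \(\STAB(G)\subseteq\THbody(G)\), which we either cite or verify with the explicit PSD certificate above.
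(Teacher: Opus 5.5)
Your proof is correct and follows the paper's argument. The paper simply notes that \(\THbody(G)\) contains the incidence vector of every stable set, and hence \(\THbody_k(G)\) contains the incidence vector of every \(k\)-colorable induced subgraph; you supply the details it leaves implicit, namely the rank-one certificate for \(\ones_S\in\THbody(G)\) and the convexity step giving \(\ones_U\in k\THbody(G)\cap[0,1]^V\), and both are sound.
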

\begin{proof}
  This follows from the fact that \(\THbody(G)\) contains the
  characteristic vector of every stable set of \(G\), hence
  \(\THbody_k(G)\) contains the characteristic vector of every
  \(k\)\nobreakdash-colorable induced subgraph of \(G\).
\end{proof}

We now show that \(\theta_k(G,w)\) can be computed as the optimal
value of an SDP.
We highlight that a very similar SDP was proposed by
\citeauthor{KuryatnikovaSV22a} in \cite{KuryatnikovaSV22a}
with an extra constraint that \(Z \geq 0\), though without the
motivation of defining a \(k\)-fold theta body.

\begin{theorem}
  Let \(G=(V,E)\) be a graph and let \(k>0\).
  Then
  \begin{equation}
    \label{eq:8}
    \THbody_k(G)
    =
    \left\{ \diag (Z) :
      \begin{array}{l}
        Z\in \Sym{V},\,
        A_G \circ Z = 0,\,
        I \circ Z \leq I,\\[2mm]
        \begin{pmatrix}
          k & \diag(Z)^\T\\
          \diag(Z) & Z
        \end{pmatrix}
                     \succeq 0
      \end{array}
    \right\}.
  \end{equation}
\end{theorem}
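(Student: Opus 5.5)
We use the standard lifted description of the theta body (Grötschel--Lovász--Schrijver):
\[
  \THbody(G)=\left\{\diag(Y) : Y\in\Sym{V},\ A_G\circ Y=0,\ \begin{pmatrix}1 & \diag(Y)^\T\\ \diag(Y) & Y\end{pmatrix}\succeq 0\right\}.
\]
The plan is to scale this by \(k\) and intersect with the box. Write \(R\) for the right-hand side of~\cref{eq:8}.

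For \(\THbody_k(G)\subseteq R\), take \(x\in k\THbody(G)\cap[0,1]^V\). Write \(x=ky\) with \(y\in\THbody(G)\), and choose a certificate \(Y\) for \(y\) as above. Set \(Z\coloneqq kY\). Then \(A_G\circ Z=0\) and \(\diag(Z)=ky=x\). The constraint \(I\circ Z\le I\) is precisely \(x\le\ones\), since \(I\circ Z\) is the diagonal matrix \(\mathrm{Diag}(x)\). For the semidefinite constraint, note that
\[
  \begin{pmatrix}k & x^\T\\ x & Z\end{pmatrix}=k\begin{pmatrix}1 & y^\T\\ y & Y\end{pmatrix}\succeq0.
\]
This is a congruence by \(\mathrm{Diag}(\sqrt k,\sqrt k I)\), or more simply, multiplication by \(k\). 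Nonnegativity of \(x\) is automatic.

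For \(R\subseteq\THbody_k(G)\), take \(Z\) feasible and set \(x\coloneqq\diag(Z)\). Two things need checking.

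First, \(x\ge0\). It follows from the PSD block that \(Z\succeq0\), so the diagonal of \(Z\) is nonnegative. The upper bound \(x\le\ones\) is exactly the constraint \(I\circ Z\le I\).

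Second, \(x/k\in\THbody(G)\). Reverse the scaling: put \(Y\coloneqq Z/k\). Then \(\diag(Y)=x/k\), \(A_G\circ Y=0\), and the block matrix for \((x/k,Y)\) is \(\tfrac1k\) times the given PSD block. Hence \(x/k\in\THbody(G)\), and therefore \(x\in k\THbody(G)\cap[0,1]^V\).

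The only real obstacle is justifying the lifted description of \(\THbody(G)\) used at the start, since the paper has not stated it explicitly. If necessary, we would derive it from the usual definition through orthonormal representations of \(\overline G\) together with a handle vector \(c\), with \(x_i=(c^\T u_i)^2\). Given such a representation, the vectors \(v_i\coloneqq (c^\T u_i)u_i\) satisfy \(c^\T v_i=\|v_i\|^2=x_i\). Their Gram matrix, bordered by \(c\), gives the PSD block with \(Y_{ij}=v_i^\T v_j\), which vanishes on the edges of \(G\). Conversely, from a Gram factorization of the PSD block one normalizes the \(v_i\) to recover an orthonormal representation; vertices with \(v_i=0\) are handled by assigning them fresh orthogonal directions. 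This equivalence is the standard GLS theorem, and we would cite it rather than reprove it.

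A minor point is the case \(k\) real rather than integer: the argument above never uses integrality of \(k\), only \(k>0\).
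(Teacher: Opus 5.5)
Your proposal is correct and follows essentially the same route as the paper. Both recall the standard lifted description of \(\THbody(G)\), rescale the certificate by setting \(Z = kY\), and observe that the box constraint \(x \le \ones\) is exactly \(I \circ Z \le I\), while \(x \ge 0\) follows from \(Z \succeq 0\). Your split into two inclusions and your sketch of why the lifted description holds (Gram factorization, normalizing the \(v_i\)) go slightly beyond the paper, which simply recalls that characterization as known.
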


\begin{proof}
  Recall that \(x\in \THbody(G)\) if and only if there exists
  \(Y\in \Sym{V}\) such that \(\diag(Y)=x\), \(A_G \circ Y = 0\), and
  \[
    \begin{pmatrix}
      1 & x^\T\\
      x & Y
    \end{pmatrix}
    \succeq 0.
  \]
  Hence \(x\in k\,\THbody(G)\) if and only if \(\tfrac1k x\in
  \THbody(G)\). By the previous characterization, this holds if
  and only if there exists \(Y\in \Sym{V}\) such that
  \(\diag(Y)=\tfrac1k x\), \(A_G \circ Y = 0\), and
  \[
    \begin{pmatrix}
      1 & \tfrac1k x^\T\\
      \tfrac1k x & Y
    \end{pmatrix}
    \succeq 0.
  \]
  Multiplying this block matrix by \(k\), and setting
  \(Z \coloneqq kY\), we get the equivalent conditions
  \(\diag(Z) = x\), \(A_G \circ Z = 0\), and
  \[
    \begin{pmatrix}
      k & x^\T\\
      x & Z
    \end{pmatrix}
    \succeq 0.
  \]
  Therefore
  \[
    k\,\THbody(G)
    =
    \left\{
      \diag (Z) :
      Z\in \Sym{V},\,
      A_G \circ Z = 0,\,
      \begin{pmatrix}
        k & \diag(Z)^\T\\
        \diag(Z) & Z
      \end{pmatrix}
      \succeq 0
    \right\}.
  \]

  By definition,
  \(
    \THbody_k(G) = k\,\THbody(G)\cap [0,1]^V.
  \)
  Since \(x=\diag (Z)\), the additional condition \(x\leq \ones\) is
  exactly
  \(
    I \circ Z \leq I.
  \)
  The condition \(x\geq 0\) is automatic from the positive
  semidefiniteness of the block matrix, because \(Z\succeq 0\) and
  hence \(Z_{ii}\geq 0\) for all \(i\in V\).

  Combining these observations gives the desired representation.
\end{proof}

Our main result in this section is that \(\theta_k(G,w)\) is an upper
bound for \(\varphi_k(G,w)\).

\begin{theorem}
  Let \(G=(V,E)\) be a graph, let \(k\in \Reals_+\), and let
  \(w\in\Reals_+^V\). Then
  \[
    \varphi_k(G,w)\leq \theta_k(G,w).
  \]
\end{theorem}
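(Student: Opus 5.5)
The plan is to use the orthonormal-representation description of \(\varphi_k(G,w)\) from the proof that \(\varphi_k(G,\cdot)\) is a gauge, namely \(\varphi_k(G,w)=\max\{\ip{w}{x} : x\in\texttt{M}_k(G)\}\). Since \(\theta_k(G,w)\) is the maximum of \(\ip{w}{x}\) over \(\THbody_k(G)\), it suffices to prove the inclusion \(\texttt{M}_k(G)\subseteq \THbody_k(G)\). For this I will use the semidefinite representation~\cref{eq:8} of \(\THbody_k(G)\) proved just above.

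Fix \(M\in\Psd{V}\) with \(M\preceq I\) and \(\ip{I}{M}=k\), and fix an orthonormal representation \(u\colon V\to\Reals^V\) of \(\overline G\). Set \(x_i\coloneqq u_i^\T M u_i\). Since \(u\) represents \(\overline G\), we have \(u_i^\T u_j=0\) whenever \(ij\in E(G)\).

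I will build the certificate \(Z\) for~\cref{eq:8} as a Gram matrix in \(\Reals^V\otimes\Reals^V\). Define
\[
a_i\coloneqq (M^{1/2}u_i)\otimes u_i \quad (i\in V), \qquad c\coloneqq \operatorname{vec}(M^{1/2}),
\]
and put \(Z_{ij}\coloneqq\ip{a_i}{a_j}=(u_i^\T M u_j)(u_i^\T u_j)\). The conditions of~\cref{eq:8} are then checked as follows.
\begin{itemize}
\item \(A_G\circ Z=0\), because \(u_i^\T u_j=0\) on edges of \(G\).
\item \(\diag(Z)=x\), because \(\|u_i\|=1\).
\item \(x_i\le \|u_i\|^2=1\), because \(M\preceq I\); so \(I\circ Z\le I\).
\item For the inner products with \(c\), the identity \(\ip{\operatorname{vec}(C)}{p\otimes q}=p^\T C q\) gives \(\ip{c}{a_i}=u_i^\T M^{1/2}M^{1/2}u_i=x_i\).
\item \(\|c\|^2=\|M^{1/2}\|_F^2=\trace M=k\).
\end{itemize}
Hence the block matrix \(\begin{pmatrix}k & x^\T\\ x & Z\end{pmatrix}\) is the Gram matrix of the vectors \(c\) and \((a_i)_{i\in V}\), so it is positive semidefinite. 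By~\cref{eq:8}, this gives \(x\in\THbody_k(G)\).

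Combining, \(\varphi_k(G,w)=\max_{x\in\texttt{M}_k(G)}\ip{w}{x}\le\max_{x\in\THbody_k(G)}\ip{w}{x}=\theta_k(G,w)\). The main obstacle is finding the tensor-lifted certificate, specifically the vector \(c\) whose norm equals the trace constraint \(\ip{I}{M}=k\). The more naive choice \(Z=Y\circ M\), as in the proof of Theorem~\ref{thm:1}, does not produce the right diagonal once weights are present. A secondary point is the equivalence \(\varphi_k(G,w)=\max\{\ip{w}{x}: x\in\texttt{M}_k(G)\}\), which the paper asserts as an extension of Theorem~\ref{thm:varphi-orth-rep}. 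If I wanted to avoid relying on it, I would instead factor an optimal \(Y\) for~\cref{def:varphiw} as \(Y=U^\T U\) with columns \(\sqrt{w_i}\,u_i\) and repeat the Ky Fan swap from that theorem.
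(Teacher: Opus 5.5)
Your proof is correct. It rests on the same underlying certificate as the paper's proof, packaged differently.

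The paper writes $M=\sum_r t_r c_r c_r^\T$. For each $r$ it certifies $x^{(r)}$, with $x^{(r)}_i=\langle c_r,u_i\rangle^2$, as a point of $\THbody(G)$, using the Gram matrix $Z^{(r)}$ of $c_r$ and the vectors $\langle c_r,u_i\rangle u_i$. It then uses convexity of $\THbody(G)$ to get $x=\sum_r t_r x^{(r)}\in k\,\THbody(G)$, and checks $x\leq\ones$ separately.

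You instead produce a single certificate for the lifted description~\cref{eq:8}. In the eigenbasis of $M$ one has $M^{1/2}u_i\otimes u_i=\sum_r\sqrt{t_r}\,\langle c_r,u_i\rangle\,c_r\otimes u_i$ and $\operatorname{vec}(M^{1/2})=\sum_r\sqrt{t_r}\,c_r\otimes c_r$. So your bordered Gram matrix is exactly $\sum_r t_r Z^{(r)}$. The factor $c_r\otimes(\cdot)$ acts as an orthogonal direct sum over eigenvectors, and the sum carries out the paper's convex-combination step inside one positive semidefinite matrix.

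What each route buys:
\begin{itemize}
\item Your route is basis-free and needs only the representation~\cref{eq:8}. That representation is stated for $k>0$, but the case $k=0$ is trivial ($M=0$, so $x=0$).
\item The paper's route avoids tensor products. It exhibits $x/k$ explicitly as a convex combination of points of $\THbody(G)$, which matches the definition $\THbody_k(G)=k\,\THbody(G)\cap[0,1]^V$ directly.
\end{itemize}

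Both arguments rely equally on the orthonormal-representation formula for $\varphi_k(G,w)$, so your use of it leaves you in the same position as the paper.
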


\begin{proof}
  We use the orthonormal representation formulation of
  \(\varphi_k(G,w)\). Namely,
  \[
    \varphi_k(G,w)
    = \max \left\{ \sum_{i\in V} w_i u_i^\T M u_i :
      \begin{array}{l}
        u \text{ is an orthonormal representation of } \overline G,\\
        0\preccurlyeq M\preccurlyeq I,\ \ip{I}{M}=k
      \end{array}
    \right\}.
  \]
  Let \(u\) and \(M\) be feasible for this formulation. Define
  \(x\in\Reals^V\) by
  \[
    x_i \coloneqq u_i^\T M u_i \qquad \forall i\in V.
  \]
  We claim that \(x\in \THbody_k(G)\).

  Since \(0\preccurlyeq M\preccurlyeq I\), take a spectral
  decomposition
  \[
    M=\sum_{r=1}^m t_r c_r c_r^\T,
  \]
  where \(c_1,\ldots,c_m\) are orthonormal vectors,
  \(0\leq t_r\leq 1\), and
  \[
    \sum_{r=1}^m t_r=\ip{I}{M}=k.
  \]
  For each \(r\), define \(x^{(r)}\in\Reals^V\) by
  \[
    x_i^{(r)} \coloneqq\langle c_r,u_i\rangle^2.
  \]
  We first show that \(x^{(r)}\in\THbody(G)\). Indeed, consider
  the vectors
  \[
    g_0 \coloneqq c_r, \qquad
    g_i \coloneqq \langle c_r,u_i\rangle u_i \quad \forall i\in V.
  \]
  Their Gram matrix \(Z^{(r)}\), indexed by \(\{0\}\cup V\), satisfies
  \[
    Z^{(r)}_{00}=1, \qquad
    Z^{(r)}_{0i}=x_i^{(r)}=Z^{(r)}_{ii} \quad \forall i\in V.
  \]
  Moreover, since \(u\) is an orthonormal representation of \(\overline G\),
  we have \(\langle u_i,u_j\rangle=0\) whenever \(ij\in E(G)\). Hence
  \[
    Z^{(r)}_{ij}
    = \langle c_r,u_i\rangle \langle c_r,u_j\rangle \langle u_i,u_j\rangle = 0 \qquad \forall ij\in E(G).
  \]
  Thus
  \(x^{(r)}\in\THbody(G)\).

  Now
  \[
    x_i = u_i^\T M u_i = \sum_{r=1}^m t_r \langle c_r,u_i\rangle^2 = \sum_{r=1}^m t_r x_i^{(r)}.
  \]
  Since \(\sum_r t_r=k\) and \(\THbody(G)\) is convex, it follows
  that
  \[
    x\in k\,\THbody(G).
  \]
  Also, because \(0\preccurlyeq M\preccurlyeq I\) and
  \(\|u_i\|=1\), we have
  \[
    0\leq x_i=u_i^\T M u_i\leq u_i^\T u_i=1
    \qquad \forall i\in V.
  \]
  Therefore
  \[
    x\in k\,\THbody(G)\cap[0,1]^V = \THbody_k(G).
  \]

  Finally,
  \[
    \sum_{i\in V} w_i u_i^\T M u_i = \sum_{i\in V} w_i x_i =
    \ip{w}{x} \leq \max\{\ip{w}{y}:y\in\THbody_k(G)\} =
    \theta_k(G,w).
  \]
  Since this holds for every feasible pair \((u,M)\), we conclude that
  \[
    \varphi_k(G,w)\leq \theta_k(G,w).\qedhere
  \]
\end{proof}

As usual, define \(\theta_k(G) \coloneqq \theta_k(G,\ones)\).
\begin{proposition}
  \label{prop:varphi-strict-theta-k}
  There exist a graph \(G\) and an integer \(k\) such that
  \[
    \varphi_k(G)<\theta_k(G).
  \]
\end{proposition}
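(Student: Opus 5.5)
We take \(G \coloneqq K_3 \cup K_1\) and \(k \coloneqq 2\), the graph already used in the proof of Proposition~\ref{prop:nonconvex}. We will show \(\theta_2(G) = 3\) and \(\varphi_2(G) \leq 2+\sqrt3\). The idea is that \(\THbody_k\) only sees the per-vertex box constraints \(x \leq \ones\). The matrix constraint \(X \preceq I\) in \(\vartheta_k\) is more global and couples the \(K_3\) block with the isolated vertex.

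For \(\theta_2(G)\), we use that the theta body of a disjoint union is the product of the theta bodies of its components. Here \(\THbody(K_3)\) is the standard simplex \(\setst{y \in \Reals_+^3}{y_1+y_2+y_3 \leq 1}\) and \(\THbody(K_1) = [0,1]\). Hence
\[
  \THbody_2(G) = \setst[\big]{x \in [0,1]^4}{x_1+x_2+x_3 \leq 2}.
\]
The point \(x = (\tfrac23,\tfrac23,\tfrac23,1)\) lies in this set because \(\tfrac12 x = (\tfrac13,\tfrac13,\tfrac13,\tfrac12) \in \THbody(G)\). Its value \(\ip{\ones}{x}\) is \(3\). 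Conversely, every point of \(\THbody_2(G)\) has \(x_1+x_2+x_3 \leq 2\) and \(x_4 \leq 1\), so \(\theta_2(G) = 3\). One could also verify this directly from the SDP representation~\cref{eq:8}.

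For the upper bound we chain Theorem~\ref{thm:1} with the explicit formula for \(\vartheta_k(K_n \cup K_1)\) from \cite{Oliveira24a} already quoted in the proof of Proposition~\ref{prop:nonconvex}:
\[
  \varphi_2(G) \leq \vartheta_2(G) = 2+\sqrt3 < 3 = \theta_2(G).
\]
This proves the proposition.

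The main obstacle is making the bound \(\vartheta_2(G) < 3\) self-contained instead of relying on the cited formula. To do so, I would exhibit a dual feasible triple \((\nu, Y, Z)\) for the dual SDP of \(\vartheta_k\) with objective value \(2\nu + \ip{I}{Y} < 3\). By the symmetry of \(K_3\), I would first restrict to matrices invariant under permutations of the three clique vertices. Then \(Z\) is supported on the clique edges with a single value \(z\), and \(Y\) has a \(2\times 2\) "quotient" structure with parameters \(y_1\) (clique diagonal), \(y_{12}\) (clique off-diagonal) and \(y_4\) (isolated vertex), plus a coupling \(y_{14}\). The constraint \(\nu I + Y + Z \succeq J\) then reduces to positive semidefiniteness on the \(2\)-dimensional symmetric subspace and on the \(2\)-dimensional complement inside the clique coordinates. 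Optimizing over these few parameters should reproduce \(2+\sqrt3\).

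Alternatively, on the primal side one can argue as follows. A symmetric optimal \(X\) has clique diagonal \(a\), isolated diagonal \(d\), clique-to-isolated entry \(e\), and zero clique off-diagonals. We have \(3a+d = 2\), and \(X \preceq I\) on the \(2\)-dimensional subspace spanned by \(\ones_{K_3}/\sqrt3\) and \(e_4\). A direct \(2\times2\) eigenvalue computation then bounds the objective \(3a + d + 6e\) by \(2+\sqrt3\). This bound is attained at \(a = \tfrac{1}{2}+\tfrac{1}{2\sqrt3}\), \(d = \tfrac12-\tfrac{\sqrt3}{2}\cdot\)\emph{(to be checked)}, or at an endpoint where \(d \leq 1\) binds.
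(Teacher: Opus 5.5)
Your computation \(\theta_2(G)=3\) for \(G=K_3\cup K_1\) is correct, but the decisive inequality is false: \(2+\sqrt3\approx 3.73>3\). The chain \(\varphi_2(G)\le\vartheta_2(G)=2+\sqrt3<3\) therefore breaks at its last step.

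The value \(2+\sqrt3\) itself is right. The symmetric primal optimum is \(a=d=\tfrac12\), \(e=\tfrac{1}{2\sqrt3}\). The values you tentatively wrote violate \(3a+d=2\) and \(d\ge 0\).

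No sharper bound can rescue this example. Two triangle vertices together with the isolated vertex induce a \(2\)-colorable subgraph, so \(\alpha_2(G)=3\). Theorem~\ref{thm:1} then gives \(\varphi_2(G)\ge 3=\theta_2(G)\), and with the paper's inequality \(\varphi_k\le\theta_k\) this forces \(\varphi_2(G)=\theta_2(G)\). Your heuristic that the global constraint \(X\preceq I\) makes \(\vartheta_k\) smaller than \(\theta_k\) points the wrong way here. The clique-to-isolated entries \(e\) push \(\vartheta_2(G)\) above \(\theta_2(G)\).

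More generally, any argument of the form \(\varphi_k\le\vartheta_k<\theta_k\) needs a graph with \(\vartheta_k(G)<\theta_k(G)\). Such a graph would settle the first open question in the conclusion negatively. For vertex-transitive graphs the two parameters coincide (see the corollary at the end of Section~\ref{sec:theta-body-k}), so this is not a step you can expect to fill in routinely.

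The missing idea is to bound \(\varphi_k\) directly through orthonormal representations, not through \(\vartheta_k\). The paper chooses a graph with \(\theta_k(G)=n\) but \(\xi(\overline G)>k\). By Theorem~\ref{thm:varphi-orth-rank}, and since \(\varphi_k(G)\le n\), this forces \(\varphi_k(G)<n\).

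Concretely, take \(G=KG(6,2)\) and \(k=3\). Here \(\vartheta(G)=5\) and vertex-transitivity gives \(\tfrac13\ones\in\THbody(G)\), so \(\theta_3(G)=15\). On the other hand, Haviv's result \(\xi(\overline G)=4\) gives \(\varphi_3(G)<15\).
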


\begin{proof}
  Let \(G \coloneqq KG(6,2)\), the Kneser graph whose vertices are the
  2-subsets of \([6]\), with two vertices adjacent when the
  corresponding sets are disjoint.
  Then \(\vartheta(G) = 5\); see~\cite{Lovasz79a}.

  Averaging an optimal point of \(\THbody(G)\) over the
  (vertex-transitive) automorphism group gives
  \(\tfrac13\ones\in\THbody(G)\).
  Thus \(\ones\in 3\THbody(G)\cap[0,1]^V=\THbody_3(G)\), and hence
  \[
    \theta_3(G)=15.
  \]
  On the other hand, \(\xi(\overline G)=4\) (see \cite{Haviv19a}).
  From Theorem~\ref{thm:varphi-orth-rank}, it follows that
  \[
    \varphi_3(G)<15=\theta_3(G).\qedhere
  \]
\end{proof}

The next result can be seen as a \(\theta_k\) counterpart to
\cref{thm:2}, albeit with a stronger hypothesis.
We denote \(\theta \coloneqq \theta_1 = \vartheta\).
\begin{corollary}
  Let \(G\) be a vertex-transitive graph on \(n\) vertices.
  Let \(k \geq 1\) be a real number.
  Then
  \[
    \theta(G \Cartesian K_k) = \min\set{k\theta(G),n} = \theta_k(G).
  \]
\end{corollary}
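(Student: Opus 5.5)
Since \(\theta = \vartheta\), the first equality \(\theta(G\Cartesian K_k) = \min\{k\theta(G),n\}\) is just \cref{thm:2} in the paper's notation. So the plan reduces to the second equality, \(\theta_k(G) = \min\{k\vartheta(G),n\}\), and then chaining the two. First observe that a vertex-transitive graph has its adjacency matrix in a homogeneous (partially) coherent algebra, namely the centralizer algebra of its automorphism group. This puts \(G\) within the hypothesis of \cref{thm:2}.

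For the upper bound, take \(x \in \THbody_k(G) = k\THbody(G)\cap[0,1]^V\). Since \(x \le \ones\), we get \(\ip{\ones}{x} \le n\). Since \(\tfrac1k x \in \THbody(G)\), we get \(\ip{\ones}{x} \le k\,\theta(G,\ones) = k\vartheta(G)\); this uses the standard fact that \(\max\setst{\ip{\ones}{y}}{y\in\THbody(G)} = \vartheta(G)\). Hence \(\theta_k(G) \le \min\{k\vartheta(G),n\}\).

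For the lower bound, I follow the averaging argument used in \cref{prop:varphi-strict-theta-k}. Let \(y\in\THbody(G)\) attain \(\ip{\ones}{y}=\vartheta(G)\). Average \(y\) over \(\mathrm{Aut}(G)\). The set \(\THbody(G)\) is convex and invariant under automorphisms, and the group acts transitively on \(V\). Therefore the average is a constant vector in \(\THbody(G)\) with the same sum, namely \(\bar y = \tfrac{\vartheta(G)}{n}\ones\). Now set \(c \coloneqq \min\{k, n/\vartheta(G)\}\) and \(x \coloneqq c\,\bar y\). We need \(x \in k\THbody(G)\). Since \(c \le k\), we have \(x = k\cdot(\tfrac{c}{k}\bar y)\). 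Moreover \(\tfrac ck \bar y \in \THbody(G)\), because \(\THbody(G)\) is a convex corner: it is convex, contains \(0\), and is down-monotone. Next, \(x \le \ones\) because \(c\,\vartheta(G)/n \le 1\). Finally, \(\ip{\ones}{x} = c\,\vartheta(G) = \min\{k\vartheta(G),n\}\).

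The only point needing care is the membership \(\tfrac ck\bar y\in\THbody(G)\), which requires that \(0\in\THbody(G)\) and that \(\THbody(G)\) is convex (or down-monotone). Both are standard properties of the theta body. An alternative is to verify the membership directly with the SDP description~\cref{eq:8}: scale the lifting matrix \(Z\) by \(c/k\), and check that the block matrix with corner \(k\) remains positive semidefinite because the corner only increases relative to the scaled certificate. Combining the two bounds with \cref{thm:2} gives the corollary.
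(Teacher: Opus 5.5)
Your proposal is correct and follows essentially the same route as the paper. The first equality comes from \cref{thm:2}, since vertex-transitive graphs satisfy its hypothesis. The upper bound \(\theta_k(G)\le\min\{k\vartheta(G),n\}\) comes from \(\THbody_k(G)= k\THbody(G)\cap[0,1]^V\). The lower bound comes from averaging an optimal point of \(\THbody(G)\) over \(\mathrm{Aut}(G)\) and scaling \(\tfrac{\vartheta(G)}{n}\ones\) to \(\min\{k\vartheta(G)/n,1\}\ones\), which uses that \(\THbody(G)\) is a convex corner.
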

\begin{proof}
  Averaging an optimal solution attaining \(\theta(G)\)
  (on~\(\THbody(G)\)) over the automorphism group of~\(G\), yields
  \begin{equation*}
    \frac{\theta(G)}{n} \ones \in \THbody(G).
  \end{equation*}
  Since \(\THbody(G)\) is a convex corner, \(t\ones \in \THbody_k(G)\)
  for \(t \coloneqq \min\set{k\theta(G)/n,1}\).
  Thus,
  \(\theta_k(G) = \theta_k(G,\ones) \geq \min\set{k\theta(G),n}\).
  The reverse inequality follows from
  \(\theta_k(G,w) \leq
  \min\set[\big]{k\theta(G,w),\iprod{w}{\ones}}\).
  Applying \cref{thm:2} concludes the proof.
\end{proof}

\section{Conclusion}

We studied several theta-type relaxations for the maximum
\(k\)-colorable induced subgraph problem.
The Narasimhan--Manber parameter \(\vartheta_k\) is the most direct
SDP generalization of Lov\'asz theta, but it is neither subadditive as
a function of vertex weights, nor naturally related to orthonormal
representations.
In this paper we filled this gap in the literature by exploring
parameters that satisfy these roles and we proved several of their
properties, leading to connections with graph products, regularity,
orthonormal representations, and convex corners.

Here is a list of questions that remain open.

\subsection*{Open questions}

\begin{enumerate}[label=(\roman*),leftmargin=*]
  \item Compare \(\theta_k\) and \(\vartheta_k\).
    Is it true that
  \[
    \theta_k(G,w)\leq \vartheta_k(G,w)
    \qquad\forall G,k,w\geq0?
  \]
  Since \(w\mapsto\vartheta_k(G,w)\) is not sublinear in general, a
  more interesting question is obtained by letting
  \(\vartheta_k^{\conv}(G,\cdot)\) be the largest monotone gauge
  dominated by \(w\mapsto\vartheta_k(G,w)\).  Does
  \[
    \vartheta_k^{\conv}(G,w)\leq \theta_k(G,w)
  \]
  hold for all \(G,k,w\geq0\)?
  Can equality hold, perhaps under additional symmetry assumptions?

\item[] \Nameandcite{KuryatnikovaSV22a} define a matrix-lifting
  relaxation \(\theta_k^3\), which is essentially the Schrijver-type
  version of the body \(\THbody_k(G)\), obtained by adding entrywise
  nonnegativity to the SDP with the RHS of~\cref{eq:8} as its feasible
  region.
  They conjecture that \(\theta_k^3(G)\leq\vartheta_k'(G)\), which is
  the Schrijver-type version of our question above.

\item \Citeauthor{SinjorgoS22a} conjectured that
  \[
    \vartheta(G \Cartesian K_k)\leq\vartheta_k(G),
  \]
  with equality whenever \(\vartheta_k(G)=k\vartheta(G)\).
  We showed this for a broad homogeneous class of graphs.
  Does the conjecture hold for all graphs and all real \(k\in[1,n]\)?

\item Is
  \[
    \mu(G)
    \leq
    \vartheta(\overline{G})
  \]
  for every graph \(G\)?
  Numerical evidence suggests that equality can fail, but no example
  is known to us with \(\mu(G)>\vartheta(\overline{G})\).

\item Is the orthonormal representation set
  \(\texttt{M}_k(G)\) convex?  If not, can one characterize its closed
  convex lower hull, i.e., the convex corner whose support function is
  \(\varphi_k(G,\cdot)\)?
\end{enumerate}

\section*{Acknowledgments}

We would like to acknowledge Rafael Grandsire and Renata Sotirov for
comments and suggestions about this work.

\section*{Statement of AI Use}

Generative AI (ChatGPT 5.5) was used for organizing and reviewing the
text.
It was also used to find the example in
Proposition~\ref{prop:varphi-strict-theta-k}.
The authors are fully responsible for the entire content of this
paper.

\printbibliography

\end{document}